\newtheorem{thm}{Theorem}[section]
\newtheorem{thmx}{Theorem}[section]
\newtheorem{case}{Case}
\newtheorem{subcase}{Subcase}
\newtheorem*{conj*}{Denjoy's Conjecture}
\theoremstyle{definition}
\newtheorem{rem}{Remark}[section]
\newtheorem{Que}{Question}
\newtheorem{exa}{Example}[section]
\numberwithin{equation}{section}
\begin{document}

\title[]{ On entire  solutions of two kinds of  quadratic trinomial Fermat type functional equations }

\author{Xuxu Xiang\quad Jianren Long*}

\address{Xuxu Xiang \newline School of Mathematical Sciences, Guizhou Normal University, Guiyang, 550025, P.R. China. }
\email{1245410002@qq.com}

\address{Jianren Long \newline School of Mathematical Sciences, Guizhou Normal University, Guiyang, 550025, P.R. China. }
\email{longjianren2004@163.com}


\date{}


\begin{abstract}The existence of entire solutions to quadratic trinomial Fermat type differential-difference equations and \(q\)-difference differential equations involving second-order derivatives is studied by using Nevanlinna theory, and the exact form of entire solutions of the equations mentioned above is founded.
Furthermore, some examples are given to show these results.
\end{abstract}



\keywords{Entire  solutions; differential-difference equations; \(q\)-difference differential equations; existence;  Fermat type functional equations\\
2020 Mathematics Subject Classification: 39B32, 30D35\\
*Corresponding author. \\
}
\maketitle
\section{Introduction }\label{sec1}

~~~~
Let $f$ be a meromorphic function in the complex plane $\mathbb{C}$. Assume that the reader is familiar with the standard notation and basic results of Nevanlinna theory, such as $m(r,f),~N(r,f)$,$~T(r,f)$, see \cite{hayman} for more details. A meromorphic
function $g$ is said to be a small function of $f$ if $T(r,g)=S(r,f)$, where $S(r,f)$  denotes any quantity
that satisfies $S(r,f)= o(T(r, f))$ as $r$ tends to infinity, outside of a possible exceptional set of finite linear measure. $\rho(f)$   denotes the order of $f$. The hyper-order of $f$ is defined by  $\rho_2(f)=\underset{r\rightarrow \infty}{\lim\sup}\frac{\log^+\log^+T(r,f)}{\log r}.$  In order to conveniently introduce some results, we need the following notations.

\begin{align}
\label{eq1}
\mathcal{A} &= \begin{pmatrix}
a_0 & a_1 & a_2 \\
b_0 & b_1 & b_2
\end{pmatrix}, ~\mathcal{A}_1=a_0b_1-a_1b_0,~\mathcal{A}_2=a_1b_2-a_2b_1,~\mathcal{A}_3=a_0b_2-a_2b_0,\\ \nonumber
    M_1(f) &= a_0 f(z) + a_1 f'(z) + a_2 f(z + c),~ M_2(f) = b_0 f(z) + b_1 f'(z) + b_2 f(z + c),\\ \nonumber
     L_1(f) &= a_0 f(z) + a_1 f''(z) + a_2 f(z + c),~ L_2(f) = b_0 f(z) + b_1 f''(z) + b_2 f(z + c),
\end{align}
  where \( a_i, b_i, i = 0, 1, 2 \), and \( c (\neq 0) \) are constants. And, the rank of $\mathcal{A}$ is denoted by \( r(\mathcal{A}) \).

In 1970, Yang \cite{Yang1970} applied Nevanlinna theory to prove that if $\frac{1}{m}+\frac{1}{n}<\frac{2}{3}$, then the Fermat type functional equation
\begin{align}
\label{0.1.2}
a(z)h^n(z) + b(z)g^m(z) = 1
\end{align}
does not admit non-constant meromorphic solutions $h(z)$ and $g(z)$, where $a(z)$ and $b(z)$ are small functions with respect to $h(z)$ and $g(z)$, and $m, n$ are positive integers.
Yang's results extend the findings of   Montel \cite{Montel}. Some important results related to Fermat type functional equations can be found in \cite{B,Chen,Gross1,Gross2}.
According to the case of $m = n = 2$, $a = b = 1$ and there is a connection between $h$ and $g$ in \eqref{0.1.2}, scholars have conducted extensive research on the entire solution of \eqref{0.1.2}. For example, Yang et al. \cite{yang2004} considered the case $g = h'$; Liu et al. \cite{Lc} studied the case $g = h(z + c)$; Hu et al. \cite{Hu} explored the case $h = M_1(f)$ and $g = M_2(f)$, where $M_1(f), M_2(f)$ are defined in \eqref{eq1}.

We next recall some results on the existence of solutions of  trinomial Fermat type functional equation
\begin{align}
\label{0.1.3}
f^m+h^p+g^n=1,
\end{align}
where $f,g,h$ are meromorphic functions, $m, n, p $ are positive integers.
In 1985,  Hayman \cite{h1} proved that if $n=m=p\ge9$, then there do not exist distinct transcendental meromorphic
 functions $f,g$ and $h$ that satisfy \eqref{0.1.3}. Later, Gundersen \cite{G2,G1}  gave examples of transcendental meromorphic functions $f, g, h$ satisfying \eqref{0.1.3} for the cases
$n=m=p = 6, 5$, the cases $n = 7$ and $n = 8$ are still open. For the cases $n=m=p\le5$, it is known that there exist transcendental
entire functions $f, g, h$ satisfying \eqref{0.1.3}, see \cite{G2,G3}.
 In 1971, Toda \cite{Toda} proved that  there do not exist non-constant entire functions $f, g,h $ satisfying \eqref{0.1.3} for $n=m=p\ge7$,  the case $n = 6$ is still open.

 In 2008, Yang et al. \cite{Yl} proved that there do not exist non-constant meromorphic functions $f, g,h $ satisfying \eqref{0.1.3} for $\frac{1}{n}+\frac{1}{m}+\frac{1}{p}<\frac{1}{3}$. For the case $m=2,n=2,p=1$, and $h=2afg$ in \eqref{0.1.3},  Saleeby \cite{S}  got   the entire solutions of  quadratic trinomial Fermat type equation
 \begin{align}
\label{1.2}
f^2+2afg+g^2=1 ,\,\, a^2\not=1,\, a\in \mathbb C,
 \end{align}
are
$f(z)=\frac{1}{\sqrt{2}}\left(\frac{\cos(b)}{\sqrt{1+a}}+\frac{\sin(b)}{\sqrt{1-a}}\right) ,g(z)=\frac{1}{\sqrt{2}}\left(\frac{\cos(b)}{\sqrt{1+a}}-\frac{\sin(b)}{\sqrt{1-a}}\right),$
where $b$ is any entire function. Later, Liu et al. \cite{Liu} proved that if $g(z)=f'(z)$ in \eqref{1.2}, then the equation
\begin{align}
\label{1.3}
f^2+2aff'+f'^2=1 ,\,\, a^2\not=1,0,\, a\in \mathbb C
\end{align}
 has no transcendental meromorphic solutions. At the same time, they also proved if $g(z)=f(z+c)$ in \eqref{1.2}, then the finite order transcendental entire
function solutions of equation
\begin{align}
\label{1.4}
f(z)^2+2af(z)f(z+c)+f^2(z+c)=1 ,\,\, a^2\not=1,0,\, a\in \mathbb C
\end{align}
must be of order equal to one. Recently, there are many explorations concerning the quadratic trinomial Fermat type equations, see \cite{Gong,Haldar,tai,Wang,Xu} and references
therein.

In 2020, Wang et al. \cite{Wang} considered the more general forms of \eqref{1.3} and \eqref{1.4}. They obtained the following conclusions.

\begin{thmx}\cite{Wang}
	\label{thA}
	Assume that  $M_1, M_2$ are defined in \eqref{eq1}, \(\alpha, \beta, \omega (\neq 0, \pm 1)\) are complex numbers, and \(r(\mathcal{A}) = 2\). If the equation
\begin{equation}
\label{1.7}
(M_1(f))^2 + 2\omega M_1(f)M_2(f) + (M_2(f))^2 = e^{\alpha z+\beta}
\end{equation}
admits an entire solution \(f\) of finite order, then \(f\) must take the form
\begin{equation*}
f(z)=C_1 e^{(\alpha/2 + A)z}+C_2 e^{(\alpha/2 - A)z}+C_3 e^{C_0 z},
\end{equation*}
where \(A (\neq 0), C_i, i = 0,1,2,3\), are constants satisfying \(C_0 = C_3 = 0\) if \(\mathcal{A}_2 = 0\), and \(C_0 = -\dfrac{\mathcal{A}_2}{\mathcal{A}_2}\) if \(\mathcal{A}_2 \neq 0\).
\end{thmx}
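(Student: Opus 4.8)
The plan is to factor the left side of \eqref{1.7}, use the zero-freeness of $e^{\alpha z+\beta}$ to write each factor as the exponential of a polynomial, combine this with the rank hypothesis to reduce to a first-order linear equation for $f$, and finally force those polynomials to be linear — the last being the delicate point. \emph{Step 1.} Since $\omega\neq0,\pm1$, the form $x^{2}+2\omega xy+y^{2}$ splits as $(x+\lambda_{1}y)(x+\lambda_{2}y)$ with $\lambda_{1,2}=\omega\pm\sqrt{\omega^{2}-1}$ distinct and nonzero (indeed $\lambda_{1}\lambda_{2}=1$). Setting $P_{j}=M_{1}(f)+\lambda_{j}M_{2}(f)$, equation \eqref{1.7} becomes $P_{1}P_{2}=e^{\alpha z+\beta}$. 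As $f$ is entire of finite order, so are $f'$, $f(z+c)$, $M_{1}(f)$, $M_{2}(f)$, hence $P_{1},P_{2}$; since $P_{1}P_{2}$ is zero-free, so is each $P_{j}$, whence $P_{j}=e^{h_{j}}$ with $h_{j}$ a polynomial and $h_{1}+h_{2}=\alpha z+\beta$ (after absorbing a suitable constant). The $2\times2$ matrix with rows $(1,\lambda_{1})$, $(1,\lambda_{2})$ is invertible, so $M_{1}(f)$ and $M_{2}(f)$ are explicit $\mathbb{C}$-linear combinations of $e^{h_{1}}$ and $e^{h_{2}}$.

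\emph{Step 2.} From the identities $b_{2}M_{1}(f)-a_{2}M_{2}(f)=\mathcal{A}_{3}f+\mathcal{A}_{2}f'$, $\,b_{1}M_{1}(f)-a_{1}M_{2}(f)=\mathcal{A}_{1}f-\mathcal{A}_{2}f(z+c)$, $\,b_{0}M_{1}(f)-a_{0}M_{2}(f)=-\mathcal{A}_{1}f'-\mathcal{A}_{3}f(z+c)$, together with the fact that at least one $\mathcal{A}_{i}\neq0$, we obtain: if $\mathcal{A}_{2}=0$ then $\mathcal{A}_{1}\neq0$ or $\mathcal{A}_{3}\neq0$ and the pertinent identity already yields $f=d_{1}e^{h_{1}}+d_{2}e^{h_{2}}$; if $\mathcal{A}_{2}\neq0$ then $f'+\tfrac{\mathcal{A}_{3}}{\mathcal{A}_{2}}f=G_{1}e^{h_{1}}+G_{2}e^{h_{2}}$ with $G_{1},G_{2}\in\mathbb{C}$. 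In both situations it remains to prove $\deg h_{1}=\deg h_{2}=1$.

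\emph{Step 3 (the main obstacle).} Assume for contradiction that $\deg h_{j}=n\ge2$; then both polynomials have degree $n$, since $h_{1}+h_{2}$ is linear. Substituting $f'$ from the first-order equation (or the shift $f(z+c)$) back into the relations $M_{1}(f)=(\text{combination of }e^{h_{j}})$ and $M_{2}(f)=(\text{combination})$, and differentiating the difference relation where needed, produces after a few lines an identity $\sum_{k}Q_{k}(z)\,e^{q_{k}(z)}\equiv0$ whose exponents $q_{k}$ lie among $h_{1}(z),h_{2}(z),h_{1}(z+c),h_{2}(z+c)$. For $n\ge2$ all pairwise differences of these are non-constant polynomials (because $h_{1}(z+c)-h_{1}(z)$ has degree $n-1\ge1$ and $h_{1}-h_{2}=2h_{1}-\alpha z-\beta$ has degree $n$), so a Borel-type lemma on linear combinations of exponentials forces every $Q_{k}\equiv0$. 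Since each $Q_{k}$ is either a constant times $h_{j}'+(\text{const})$ with $h_{j}'$ non-constant, or a bare constant, tracking the vanishing forces all of $a_{1},a_{2},b_{1},b_{2}$ to vanish (contradicting $r(\mathcal{A})=2$), or else $f\equiv0$ or $M_{1}(f)\equiv M_{2}(f)\equiv0$ (contradicting \eqref{1.7}). Hence $n=1$, so $h_{j}(z)=\mu_{j}z+\nu_{j}$ with $\mu_{1}+\mu_{2}=\alpha$; write $\mu_{j}=\alpha/2\pm A$.

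\emph{Step 4.} If $\mathcal{A}_{2}=0$, then $f=d_{1}e^{\mu_{1}z+\nu_{1}}+d_{2}e^{\mu_{2}z+\nu_{2}}$ already has the asserted shape with $C_{0}=C_{3}=0$. If $\mathcal{A}_{2}\neq0$, integrating $f'+\tfrac{\mathcal{A}_{3}}{\mathcal{A}_{2}}f=G_{1}e^{\mu_{1}z+\nu_{1}}+G_{2}e^{\mu_{2}z+\nu_{2}}$ with integrating factor $e^{(\mathcal{A}_{3}/\mathcal{A}_{2})z}$ gives $f(z)=C_{1}e^{(\alpha/2+A)z}+C_{2}e^{(\alpha/2-A)z}+C_{3}e^{-(\mathcal{A}_{3}/\mathcal{A}_{2})z}$, i.e. $C_{0}=-\mathcal{A}_{3}/\mathcal{A}_{2}$. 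The one point requiring care is the resonant case $\mu_{j}=-\mathcal{A}_{3}/\mathcal{A}_{2}$, in which integration would seem to introduce a term $z\,e^{\mu_{j}z}$; substituting the resulting $f$ into whichever of $M_{1}(f),M_{2}(f)$ was not used in Step 2 leaves such a $z$-term on exactly one side of the equality, so this case does not actually occur. The remaining coefficient degeneracies (e.g. $a_{2}=b_{2}=0$ or $a_{1}=0$) are handled by the same Borel and rank bookkeeping.
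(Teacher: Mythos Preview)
The paper does not contain a proof of Theorem~A; it is quoted from \cite{Wang} as background. More importantly, the Remark immediately following Theorem~A in the paper records that Liu et al.\ \cite{Liu1} exhibited entire solutions of \eqref{1.7} that are \emph{not} of the stated form, so the statement you are trying to prove is known to be incomplete. Your Steps~1--3 are essentially the standard route (and mirror what the present paper does in its proof of the second-derivative analogue, Theorem~\ref{th1}): factor, invoke Hadamard to get polynomial exponents, eliminate via the three minors $\mathcal{A}_1,\mathcal{A}_2,\mathcal{A}_3$, and use a Borel-type lemma on the resulting four-term exponential identity to force $\deg h_j=1$. That part is fine modulo bookkeeping.

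The genuine error is in Step~4, where you dismiss the resonant case $\mu_j=-\mathcal{A}_3/\mathcal{A}_2$ by claiming the $z\,e^{\mu_j z}$ term would survive in ``whichever of $M_1(f),M_2(f)$ was not used.'' It need not. If $f(z)=z e^{\mu z}$, then the coefficient of $z e^{\mu z}$ in $M_k(f)$ is $a_0+a_1\mu+a_2 e^{\mu c}$ (resp.\ the $b$-version), and both vanish simultaneously exactly when $(1,\mu,e^{\mu c})$ lies in the one-dimensional null space of $\mathcal{A}$, which is spanned by $(\mathcal{A}_2,-\mathcal{A}_3,\mathcal{A}_1)$. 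Thus whenever $\mu=-\mathcal{A}_3/\mathcal{A}_2$ \emph{and} $e^{-\mathcal{A}_3 c/\mathcal{A}_2}=\mathcal{A}_1/\mathcal{A}_2$, the $z e^{\mu z}$ contribution cancels in both $M_1$ and $M_2$, and the corresponding $f$ is a bona fide solution outside the asserted form. This is precisely the family of missing solutions noted in the paper's Remark, so your Step~4 cannot be repaired without weakening the conclusion.
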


\begin{rem}
For simplicity, we do not add the expressions of the constants
$C_i,i=1,2,3,$ to Theorem \ref{thA}. The readers can refer to the original paper for these  expressions.  However, Liu et al. \cite[Example 1, 2]{Liu1} pointed out that that there are some entire solutions which were missing in Theorem \ref{thA}.  Liu et al. \cite[Theorem 1]{Liu1} also  given   all the  entire solutions  of \eqref{1.7} under the
weakened growth condition when $\alpha z+\beta$ is replaced by a non-constant polynomial.
\end{rem}

Later, Gong et al. \cite{Gong} extended the first-order derivatives $f'$ in \eqref{1.7} to second-order derivatives $f''$. Upon carefully reviewing  Gong et al. \cite[Theorem 1.5]{Gong}, we can summarize and express their result as follows.

\begin{thmx}\cite[Theorem 1.5]{Gong}
	\label{thB}
	Assume that  $L_1, L_2$ are defined in \eqref{eq1}, \(\alpha, \beta, \omega (\neq 0, \pm 1)\) are complex numbers, and \(r(\mathcal{A}) = 2\). If the equation
\begin{equation}
\label{1.8}
(L_1(f))^2 + 2\omega L_1(f)L_2(f) + (L_2(f))^2 = e^{\alpha z+\beta}
\end{equation}
admits an entire solution \(f\) of finite order, then one the following
asserts holds.
	\begin{itemize}
		\item[\rm{(i)}]If \(\mathcal{A}_2= 0\), then $f(z) = C_1 e^{(A + \frac{\alpha}{2})z} + C_2 e^{(-A + \frac{\alpha}{2})z}$, where $ A, B$ are arbitrary constants,
        \(C_1=\frac{(b_2\omega_2 - a_2)e^{B+\frac{\beta}{2}}}{(\omega_2 - \omega_1)\mathcal{A}_3}\) and  \(C_2=\frac{(a_2 - b_2\omega_1)e^{-B+\frac{\beta}{2}}}{(\omega_2 - \omega_1)\mathcal{A}_3}\) or \(C_1=\frac{(b_1\omega_2 - a_1)e^{B+\frac{\beta}{2}}}{(\omega_2 - \omega_1)\mathcal{A}_1}\) and \(C_2=\frac{(a_1 - b_1\omega_1)e^{-B+\frac{\beta}{2}}}{(\omega_2 - \omega_1)\mathcal{A}_1}\).
		\item[\rm{(ii)}]If  \(\mathcal{A}_2\not= 0\), and \(\rho(f)\leq 1\), then
     $  f(z) = C_1 e^{(A + \frac{\alpha}{2})z} + C_2 e^{(-A + \frac{\alpha}{2})z} + C_3 e^{C_5 z} + C_4 e^{C_6 z}$, where $ A, B, C_3,C_4$ are arbitrary constants, and $C_1, C_2, C_5,C_6$ depend on $a_i,i=1,2,3,$ and $\alpha$  as follows:
     \begin{itemize}
         \item[\rm{(1)}]If \((\pm A+\frac{\alpha}{2})^2\neq\frac{-\mathcal{A}_3}{\mathcal{A}_2}\), then $
    C_5=\sqrt{-\frac{\mathcal{A}_3}{\mathcal{A}_2}}, C_6=-\sqrt{-\frac{\mathcal{A}_3}{\mathcal{A}_2}} $,
   $ C_1=\frac{4(b_2\omega_2 - a_2)e^{\frac{\beta + 2B}{2}}}{(\omega_2-\omega_1)[\mathcal{A}_2(2A+\alpha)^2+4\mathcal{A}_3]},$\\
   $ C_2=\frac{4(a_2 - b_2\omega_1)e^{\frac{\beta - 2B}{2}}}{(\omega_2-\omega_1)[\mathcal{A}_2(-2A+\alpha)^2+4\mathcal{A}_3]};$
         \item[\rm{(2)}]If \((A+\frac{\alpha}{2})^2=\frac{-\mathcal{A}_3}{\mathcal{A}_2}\) and \(A\neq0\), then $ C_5 = A+\frac{\alpha}{2}, C_6=-A-\frac{\alpha}{2},$
    $C_1=\frac{(a_2 - b_2\omega_2)e^{\frac{\beta + 2B}{2}}z}{(2A+\alpha)(\omega_2-\omega_1)\mathcal{A}_2},$\\
    $C_2=\frac{(a_2 - b_2\omega_1)e^{\frac{\beta - 2B}{2}}}{2\alpha A(\omega_2-\omega_1)\mathcal{A}_2};$
         \item[\rm{(3)}]If \((-A+\frac{\alpha}{2})^2=\frac{-\mathcal{A}_3}{\mathcal{A}_2}\) and \(A\neq0\), then   $ C_5=-A+\frac{\alpha}{2}, C_6=A-\frac{\alpha}{2}$,
   $  C_1=\frac{(b_2\omega_2 - a_2)e^{\frac{\beta + 2B}{2}}}{-2\alpha A(\omega_2-\omega_1)\mathcal{A}_2}, $\\
   $C_2=\frac{(a_2 - b_2\omega_1)e^{\frac{\beta - 2B}{2}}z}{(2A-\alpha)(\omega_2-\omega_1)\mathcal{A}_2};$

         \item[\rm{(4)}]
         If \(\frac{\alpha^2}{4}\mathcal{A}_2=-\mathcal{A}_3\), then $C_5 = C_6=\frac{\alpha}{2}, $
         $ C_1=\frac{(b_2\omega_2 - a_2)e^{\frac{\beta + 2B}{2}}z}{\alpha(\omega_2-\omega_1)\mathcal{A}_2},$
         $C_2=\frac{(a_2 - b_2\omega_1)e^{\frac{\beta - 2B}{2}}z}{\alpha(\omega_2-\omega_1)\mathcal{A}_2};$
     \end{itemize}
      $ \text{where } \omega_1=-\omega+\sqrt{\omega^2 - 1}, \omega_2=-\omega-\sqrt{\omega^2 - 1}.$
	\end{itemize}
\end{thmx}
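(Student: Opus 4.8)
The plan is to reduce \eqref{1.8} to a constant‑coefficient linear ODE by factoring the quadratic form. Set $\omega_1 = -\omega + \sqrt{\omega^2 - 1}$ and $\omega_2 = -\omega - \sqrt{\omega^2 - 1}$, so that $\omega_1 + \omega_2 = -2\omega$, $\omega_1\omega_2 = 1$, and $\omega_1 \ne \omega_2$ since $\omega \ne \pm 1$. Using $u^2 + 2\omega uv + v^2 = (u - \omega_1 v)(u - \omega_2 v)$, equation \eqref{1.8} becomes
\begin{equation*}
\bigl(L_1(f) - \omega_1 L_2(f)\bigr)\bigl(L_1(f) - \omega_2 L_2(f)\bigr) = e^{\alpha z + \beta}.
\end{equation*}
First I would observe that both factors are entire (as $f$ is) and their product is zero‑free, hence each factor is a zero‑free entire function; and since $T(r, L_i(f)) = O(T(r,f))$ (from the lemma on the logarithmic derivative together with $T(r, f(z+c)) = T(r,f) + S(r,f)$ for finite‑order $f$), each factor is of finite order, so by Hadamard factorisation equals $e^{p_j(z)}$ with $p_j$ a polynomial and $p_1 + p_2 = \alpha z + \beta$. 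The same estimate gives $\deg p_j \le \rho(f)$; under the hypothesis $\rho(f) \le 1$ of part (ii) this forces $\deg p_j \le 1$ (in part (i) the same will follow a posteriori), so I may write $p_1(z) = (\tfrac{\alpha}{2} + A)z + \tfrac{\beta}{2} + B$ and $p_2(z) = (\tfrac{\alpha}{2} - A)z + \tfrac{\beta}{2} - B$.

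Next, since $\omega_1 \ne \omega_2$, the two relations $L_1(f) - \omega_j L_2(f) = e^{p_j}$ form a nonsingular linear system for $(L_1(f), L_2(f))$, which I would solve to get $L_1(f) = \frac{\omega_2 e^{p_1} - \omega_1 e^{p_2}}{\omega_2 - \omega_1}$ and $L_2(f) = \frac{e^{p_1} - e^{p_2}}{\omega_2 - \omega_1}$. To remove the shift term I take $b_2 L_1(f) - a_2 L_2(f) = \mathcal{A}_3 f + \mathcal{A}_2 f''$; when $\mathcal{A}_2 \ne 0$ this gives the constant‑coefficient ODE
\begin{equation*}
f'' + \frac{\mathcal{A}_3}{\mathcal{A}_2}\,f = \frac{(b_2\omega_2 - a_2)e^{p_1} + (a_2 - b_2\omega_1)e^{p_2}}{(\omega_2 - \omega_1)\,\mathcal{A}_2},
\end{equation*}
whose homogeneous solutions are $e^{\pm\sqrt{-\mathcal{A}_3/\mathcal{A}_2}\,z}$ and whose particular solution I would find by undetermined coefficients from the right‑hand side, a combination of $e^{(\alpha/2 + A)z}$ and $e^{(\alpha/2 - A)z}$. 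This produces the stated form $f = C_1 e^{(A+\alpha/2)z} + C_2 e^{(-A+\alpha/2)z} + C_3 e^{C_5 z} + C_4 e^{C_6 z}$ with $C_3, C_4$ free; the four sub‑cases (1)–(4) are exactly the resonance alternatives according to whether neither, one, the other, or (when $A = 0$) both of $\tfrac{\alpha}{2} \pm A$ agree with a characteristic root $\pm\sqrt{-\mathcal{A}_3/\mathcal{A}_2}$, the resonant cases producing the factor $z$ visible in the corresponding $C_i$.

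For part (i), where $\mathcal{A}_2 = 0$, I would instead use $b_2 L_1(f) - a_2 L_2(f) = \mathcal{A}_3 f$ and $b_1 L_1(f) - a_1 L_2(f) = \mathcal{A}_1 f$; since $r(\mathcal{A}) = 2$ at least one of $\mathcal{A}_3, \mathcal{A}_1$ is nonzero, so $f$ is directly a constant multiple of a combination of $e^{p_1}, e^{p_2}$ — which in turn forces $\deg p_j \le 1$, since a higher degree would make $f''$, hence each $L_i(f)$ and each $e^{p_j}$, of the wrong exponential type. Reading off coefficients then yields the two alternative expressions for $C_1, C_2$ in (i), with $\mathcal{A}_3$ or $\mathcal{A}_1$ in the denominator.

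The conceptual skeleton — factorisation, the zero‑free‑factor argument, reduction to an ODE — is routine. The real work, and the main obstacle, is twofold. First, the bookkeeping in the resonant sub‑cases (1)–(4): one must track precisely which exponent resonates with which characteristic root and carry the extra $z$ factor correctly through the formulas for $C_1, C_2$. Second, and more delicate, is the consistency step: after solving only the ODE coming from $b_2 L_1 - a_2 L_2$, one must substitute the resulting $f$ back into a second independent relation of the original system (for instance $b_1 L_1(f) - a_1 L_2(f) = \mathcal{A}_1 f - \mathcal{A}_2 f(z+c)$) to confirm it actually holds and to see whether the ostensibly free constants are in fact constrained. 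Since it was precisely an omitted check of this type that caused solutions to be missed in Theorem \ref{thA}, this is the step I would treat with the greatest care.
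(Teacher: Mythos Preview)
The paper does not itself prove Theorem~\ref{thB}; it is quoted from \cite{Gong} as background. However, the paper's proof of its own Theorem~\ref{th1} (which extends Theorem~\ref{thB}) follows, in the $\deg g=1$ sub-cases with $\mathcal{A}_2\neq0$, exactly your route: the factorisation (there via \cite[Lemma~1]{Liu1}) yielding your formulas for $L_1(f)$ and $L_2(f)$, the linear combination $b_2L_1-a_2L_2$ producing the second-order ODE $\mathcal{A}_3f+\mathcal{A}_2f''=\ldots$ (the paper's \eqref{2.5}), and the same resonance case split when solving it. For $\mathcal{A}_2=0$ the paper simply invokes Theorem~\ref{thB}(i) rather than re-deriving it. So your plan coincides with the method visible in the paper.

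One caution on your part~(i): the phrase ``a higher degree would make $f''$ \ldots\ of the wrong exponential type'' is a heuristic, not a proof. Once $\mathcal{A}_2=0$ hands you $f$ directly as a combination of $e^{p_1},e^{p_2}$, the bound $\deg p_j\le1$ has to be extracted by substituting this $f$ back into the remaining independent relation (the one still containing $f''$ or $f_c$) and comparing polynomial degrees in the exponents---precisely the consistency step you already flag as delicate. In the degenerate sub-case $\mathcal{A}_2=\mathcal{A}_3=0$ this back-substitution yields a genuine pair of polynomial identities (see the paper's \eqref{2.11}--\eqref{2.12} in the proof of Theorem~\ref{th1}) from which $\deg p=\deg g=1$ is read off; you should expect to carry out the analogous computation rather than rely on an order-of-growth argument.
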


By replacing $L_1$ with $Q_1(f)$ and $L_2(f)$ with $Q_2(f)$ in \eqref{1.8}, respectively,  where
\begin{align}
\label{Q}
Q_1(f)=a_0 f(z) + a_1 f''(z) + a_2 f(qz), Q_2(f) = b_0 f(z) + b_1 f''(z) + b_2 f(qz),
\end{align}
 Gong et al. \cite{Gong} obtained the $q$-difference analogue of Theorem \ref{thB}.

\begin{thmx}\cite[Theorem 1.7]{Gong}
	\label{thC}
	Assume that  \(\alpha, \beta, \omega (\neq 0, \pm 1),q\) are complex numbers such that \(q^n\neq 0, \pm 1,\) $n\in \mathbb{N}^+$, and \(r(\mathcal{A}) = 2\). If the equation
\begin{equation}
\label{1.9}
(Q_1(f))^2 + 2\omega Q_1(f)Q_2(f) + (Q_2(f))^2 = e^{\alpha z+\beta}
\end{equation}
admits an entire solution \(f\) of $\rho(f)<\infty$, then one the following
asserts holds.
	\begin{itemize}
		\item[\rm{(i)}]If \(\mathcal{A}_2= 0\), then the conclusion  of Theorem \ref{thB}-$\rm{(i)}$ holds.
		\item[\rm{(ii)}]If  \(\mathcal{A}_2\not= 0\) and \(\rho(f)\leq 1\), then
     $  f(z) = C_1 e^{(A + \frac{\alpha}{2})z} + C_2 e^{(-A + \frac{\alpha}{2})z} + C_3 e^{C_5 z} + C_4 e^{C_6 z}$ where $ A, B, C_3,C_4$ are arbitrary constants, and $C_1, C_2, C_5,C_6$ are given as same as the conclusion in  $(1)-(3)$ of Theorem \ref{thB}-$\rm{(ii)}$ .
	\end{itemize}
\end{thmx}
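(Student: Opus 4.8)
The plan is to factor the left side of \eqref{1.9}, turn it into a constant-coefficient second-order linear ODE coupled with a first-order $q$-difference equation, and then solve these. Since $\omega\neq0,\pm1$, the polynomial $t^{2}+2\omega t+1$ factors as $(t-\omega_{1})(t-\omega_{2})$ with $\omega_{1}\omega_{2}=1$ and $\omega_{1}+\omega_{2}=-2\omega$, where $\omega_{1},\omega_{2}$ are as in Theorem \ref{thB}, so \eqref{1.9} reads
\begin{equation*}
\bigl(Q_{1}(f)-\omega_{1}Q_{2}(f)\bigr)\bigl(Q_{1}(f)-\omega_{2}Q_{2}(f)\bigr)=e^{\alpha z+\beta}.
\end{equation*}
Because the right side is zero-free and $f$ — hence $Q_{1}(f)$ and $Q_{2}(f)$ — is entire of finite order, I would argue that each factor on the left is a zero-free entire function of finite order, hence equals $e^{p_{j}}$ for a polynomial $p_{j}$, $j=1,2$, with $p_{1}+p_{2}=\alpha z+\beta$ after absorbing a constant. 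Solving this linear system gives $Q_{1}(f)=\frac{\omega_{2}e^{p_{1}}-\omega_{1}e^{p_{2}}}{\omega_{2}-\omega_{1}}$ and $Q_{2}(f)=\frac{e^{p_{1}}-e^{p_{2}}}{\omega_{2}-\omega_{1}}$; then, because $r(\mathcal A)=2$, the combinations $b_{2}Q_{1}(f)-a_{2}Q_{2}(f)=\mathcal A_{3}f+\mathcal A_{2}f''$ and $b_{1}Q_{1}(f)-a_{1}Q_{2}(f)=\mathcal A_{1}f-\mathcal A_{2}f(qz)$ yield
\begin{equation*}
\mathcal A_{2}f''+\mathcal A_{3}f=\frac{(b_{2}\omega_{2}-a_{2})e^{p_{1}}+(a_{2}-b_{2}\omega_{1})e^{p_{2}}}{\omega_{2}-\omega_{1}}\qquad(\star)
\end{equation*}
together with an identity of the shape $\mathcal A_{1}f(z)-\mathcal A_{2}f(qz)=(\text{explicit linear combination of }e^{p_{1}},e^{p_{2}})$.

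If $\mathcal A_{2}=0$, then $r(\mathcal A)=2$ forces $\mathcal A_{3}\neq0$ (the case $\mathcal A_{1}\neq0$ being symmetric through the second identity), and $(\star)$ gives $f$ directly as $\gamma_{1}e^{p_{1}}+\gamma_{2}e^{p_{2}}$ with $\gamma_{1}=\frac{b_{2}\omega_{2}-a_{2}}{(\omega_{2}-\omega_{1})\mathcal A_{3}}$, $\gamma_{2}=\frac{a_{2}-b_{2}\omega_{1}}{(\omega_{2}-\omega_{1})\mathcal A_{3}}$. To pin down $p_{1},p_{2}$ I would substitute this expression back into $Q_{1}(f)-\omega_{1}Q_{2}(f)=e^{p_{1}}$: since $f(qz)$ contributes $e^{p_{j}(qz)}$ and $f''$ contributes polynomial coefficients $p_{j}''+(p_{j}')^{2}$, one is left with an identity $\sum_{j}R_{j}e^{g_{j}}\equiv0$ among the four exponentials $e^{p_{1}},e^{p_{2}},e^{p_{1}(qz)},e^{p_{2}(qz)}$ with polynomial $R_{j}$. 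Borel's theorem on sums of exponentials then applies: because $q^{n}\neq\pm1$ for all $n\geq1$, the differences $p_{j}(qz)-p_{j}(z)$ and $p_{j}(qz)-p_{k}(z)$ are non-constant whenever $p_{j}$ is non-linear, so comparing coefficients forces $\deg p_{j}\leq1$. Writing $p_{1}=(A+\tfrac{\alpha}{2})z+B+\tfrac{\beta}{2}$, $p_{2}=(-A+\tfrac{\alpha}{2})z-B+\tfrac{\beta}{2}$ then reproduces the form in Theorem \ref{thB}(i), with the stated $C_{1},C_{2}$.

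If $\mathcal A_{2}\neq0$ and $\rho(f)\leq1$, then the left side of $(\star)$ has order at most $1$, hence so does its right side $c_{1}e^{p_{1}}+c_{2}e^{p_{2}}$; since $c_{1},c_{2}$ are not both zero (otherwise $a_{2}=b_{2}=0$, contradicting $\mathcal A_{2}\neq0$), this forces $\deg p_{j}\leq1$, so again $p_{1}=(A+\tfrac{\alpha}{2})z+B+\tfrac{\beta}{2}$, $p_{2}=(-A+\tfrac{\alpha}{2})z-B+\tfrac{\beta}{2}$. Now $(\star)$ is a constant-coefficient linear ODE: its homogeneous solutions span $e^{\pm\sqrt{-\mathcal A_{3}/\mathcal A_{2}}\,z}$, which I would identify with the free terms $C_{3}e^{C_{5}z}+C_{4}e^{C_{6}z}$, while a particular solution comes from undetermined coefficients, resonance occurring precisely when $(A+\tfrac{\alpha}{2})^{2}=-\mathcal A_{3}/\mathcal A_{2}$ or $(-A+\tfrac{\alpha}{2})^{2}=-\mathcal A_{3}/\mathcal A_{2}$ and producing the factor $z$ in $C_{1}$ or $C_{2}$; this gives cases $(1)$–$(3)$ of Theorem \ref{thB}(ii). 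Finally, in the doubly-resonant configuration $\tfrac{\alpha^{2}}{4}\mathcal A_{2}=-\mathcal A_{3}$ — case $(4)$ there — I would substitute the resulting candidate, which then carries a term proportional to $ze^{\alpha z/2}$, into the $q$-difference identity $\mathcal A_{1}f(z)-\mathcal A_{2}f(qz)=(\text{combination of }e^{p_{1}},e^{p_{2}})$ and compare coefficients; since $z\mapsto qz$ sends $ze^{\alpha z/2}$ to $qze^{\alpha qz/2}$ and $q^{n}\neq1$, the required cancellation fails, so this configuration yields no entire solution and is correctly absent from the conclusion.

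The hard part will be the case $\mathcal A_{2}=0$: there is no growth restriction, so $\deg p_{j}\leq1$ must be extracted from the functional equation alone, and the Borel-type identity among $e^{p_{1}},e^{p_{2}},e^{p_{1}(qz)},e^{p_{2}(qz)}$ has to be dissected through several degenerate configurations — some exponents may coincide as functions, which forces special relations between $q$, $\alpha$ and the leading coefficients of the $p_{j}$, and various $a_{i},b_{i}$ may vanish — each needing separate treatment, and it is exactly the hypothesis $q^{n}\neq0,\pm1$ that excludes the periodicity such coincidences would require. Carrying out this bookkeeping, keeping track of the explicit constants $C_{1},C_{2}$, and verifying that the candidate functions genuinely satisfy \eqref{1.9} (i.e. the compatibility of $(\star)$ with the $q$-difference identity) is the technical core of the argument.
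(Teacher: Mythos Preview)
The paper does not actually prove Theorem~\ref{thC}: it is quoted from Gong et al.\ as background, and the paper's own contribution is Theorem~\ref{th2}, which strengthens it by dropping the restriction $\rho(f)\le 1$ and replacing $\alpha z+\beta$ by a general polynomial. So the relevant comparison is with the proof of Theorem~\ref{th2}.

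Your initial reduction --- factoring via $\omega_1,\omega_2$, writing $Q_1(f)-\omega_jQ_2(f)=e^{p_j}$ with $p_1+p_2=\alpha z+\beta$, and taking the linear combinations that produce $(\star)$ together with the companion identity $\mathcal{A}_1f-\mathcal{A}_2f(qz)=\cdots$ --- is exactly what the paper does (its equations \eqref{3.2}--\eqref{3.4}, with $p_1=g+p$, $p_2=g-p$). For part~(i) your plan to substitute $f=\gamma_1e^{p_1}+\gamma_2e^{p_2}$ back and run a Borel argument on the four exponentials $e^{p_1},e^{p_2},e^{p_1(qz)},e^{p_2(qz)}$ is essentially the paper's method as well: the paper instead differentiates \eqref{3.3} twice and combines with the $q$-shift of \eqref{3.4} to manufacture the Borel identity \eqref{3.10} (with exponentials $e^{\pm p},e^{\pm p_q+g_q-g}$) without first solving for $f$, but the underlying mechanism and the role of the hypothesis $q^n\neq\pm1$ are the same.

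Where you genuinely diverge is in part~(ii). You exploit the hypothesis $\rho(f)\le 1$ directly: since $\mathcal{A}_2f''+\mathcal{A}_3f$ then has order at most $1$, so does $c_1e^{p_1}+c_2e^{p_2}$, and (as $c_1,c_2$ are not both zero when $\mathcal{A}_2\neq0$) this forces $\deg p_j\le 1$; after that $(\star)$ is a constant-coefficient ODE and the solution falls out by undetermined coefficients. This is a clean and legitimate shortcut, and it is \emph{not} what the paper does. Because the paper is proving the stronger Theorem~\ref{th2} (no order restriction), it cannot use this; instead it differentiates again and combines the resulting equations to derive the four-term exponential identity \eqref{3.11}, then runs a second Borel/case analysis to force $\deg g=\deg p=1$. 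Your route is shorter but buys only Theorem~\ref{thC}; the paper's route is heavier but yields the improvement. Your handling of the excluded double-resonance case~(4) via the $q$-difference companion identity is also reasonable, though note that the paper's Theorem~\ref{th2} subsumes this exclusion automatically once $\deg g=1$ is established without assuming $\rho(f)\le1$.
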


Inspired by  Theorem \ref{thB}  and  Theorem \ref{thC}, it is natural to ask the following question:
\begin{Que}
\label{Q1}
Can the condition \(\rho(f)\leq 1\)  be removed from assertion
$\rm{(ii)}$ of Theorem \ref{thB} and Theorem \ref{thC},  respectively ?
\end{Que}

The following example illustrates that when $\alpha z+\beta$ in \eqref{1.8}  is replaced by a polynomial $g(z)=z^{2}$, there exists an entire function solution satisfies the equation \eqref{1.8}.
\begin{exa}\cite[Example 3]{Liu1}
\label{ex1}Let $\omega = 2$, $g(z) = z^2$, and
    $L_1(f) = -(2 + \sqrt{3})f(z) + (2 - \sqrt{3})f(z + c),
    L_2(f) = f(z) - f(z + c),$
where $c$ is a non-zero constant. Then \eqref{1.8} admits an entire solution
   $ f(z) = -\frac{\sqrt{3}}{6} e^{z^2 - cz}.$

\end{exa}

Inspired by  Example \ref{eq1}, it is interesting to ask the following question:
\begin{Que}
\label{Q2}
Can we find out all possible forms of entire solutions when $\alpha z+\beta$ in \eqref{1.8} and \eqref{1.9} is replaced by a  polynomial $g(z)$ with $\deg g >1$?
\end{Que}	

The purpose of this article is to solve Question \ref{Q1} and Question \ref{Q2}. In  Sections \ref{s2} and \ref{s3}, we study all possible forms of entire solutions \(f\)  for the quadratic trinomial Fermat  type differential-difference equation \((L_1(f))^2 + 2\omega L_1(f)L_2(f) + (L_2(f))^2 = e^{g(z)}\) and the \(q\)-difference differential equation \((Q_1(f))^2 + 2\omega Q_1(f)Q_2(f) + (Q_2(f))^2 = e^{g(z)}\),
respectively. Some examples are provided in Section 4 to illustrate the results from Sections 2 and 3.

\section{quadratic trinomial Fermat  type differential-difference equations }
\label{s2}
In this section, we study the existence of solutions to the Fermat  type differential-difference equation \((L_1(f))^2 + 2\omega L_1(f)L_2(f) + (L_2(f))^2 = e^{g(z)}\). The parts related to Theorem \ref{thB} in Questions \ref{Q1} and \ref{Q2} are solved here.

\begin{thm}
	\label{th1}
	Assume that  $L_1, L_2$ are defined in \eqref{eq1}, \( \omega (\neq \pm 1)\) is a non-zero complex numbers, $\omega_1=-\omega+\sqrt{\omega^2 - 1}, \omega_2=-\omega-\sqrt{\omega^2 - 1},$ $g$ is a non-constant polynomial, and \(r(\mathcal{A}) = 2\). If the equation
\begin{equation}
\label{2.1}
(L_1(f))^2 + 2\omega L_1(f)L_2(f) + (L_2(f))^2 = e^{2g(z)}
\end{equation}
admits an entire solution \(f\) with  $\rho_2(f)<1$, then one the following
cases holds.
	\begin{itemize}
		\item[\rm{(1)}] $ g=\alpha z+\beta$, \(\mathcal{A}_2= 0\), $f(z) = C_1 e^{(A + \alpha)z} + C_2 e^{(-A + \alpha) z}$,
         where   $\alpha(\not=0),\beta, A, B$ are  constants, \(C_1=\frac{(b_2\omega_2 - a_2)e^{B+\beta}}{(\omega_2 - \omega_1)\mathcal{A}_3}\) and  \(C_2=\frac{(a_2 - b_2\omega_1)e^{-B+\beta}}{(\omega_2 - \omega_1)\mathcal{A}_3}\) or \(C_1=\frac{(b_1\omega_2 - a_1)e^{B+\beta}}{(\omega_2 - \omega_1)\mathcal{A}_1}\) and \(C_2=\frac{(a_1 - b_1\omega_1)e^{-B+\beta}}{(\omega_2 - \omega_1)\mathcal{A}_1}\).

\item[\rm{(2)}]$ g=\alpha z+\beta$, \(\mathcal{A}_2\not= 0\) and \(\mathcal{A}_3= 0\),  \begin{align*}
   f(z) = \frac{1}{\mathcal{A}_2 (\omega_2 - \omega_1)}\cdot
\begin{cases}
\left( A_1 e^{(\alpha + A)z} + A_2 e^{(\alpha - A)z} \right) + C_3 z + C_4, & \alpha \neq \pm A ,\\
\left( D_1 e^{2A z} + D_2 z^2 \right) + C_3 z + C_4, & \alpha = A, \\
\left( C_1 z^2 + C_2 e^{-2A z} \right) + C_3 z + C_4, & \alpha = -A,
\end{cases}
 \end{align*}
where $\alpha(\not=0), \beta,A,B,C_3,C_4$ are constants and

   $ A_1 = \dfrac{(b_2 \omega_2 - a_2) e^{\beta + B}}{(\alpha + A)^2} ,A_2 = \dfrac{(a_2 - b_2 \omega_1) e^{\beta - B}}{(\alpha - A)^2},$ $
D_1 = \dfrac{(b_2 \omega_2 - a_2) e^{\beta + B}}{4A^2},$\\
$D_2 = \dfrac{(a_2 - b_2 \omega_1) e^{\beta - B}}{2},
C_1 = \dfrac{(b_2 \omega_2 - a_2) e^{\beta + B}}{2},C_2 = \dfrac{(a_2 - b_2 \omega_1) e^{\beta - B}}{4A^2}$.

    \item [\rm{(3)}]$ g=\alpha z+\beta$,  \(\mathcal{A}_2\not= 0\) and \(\mathcal{A}_3\not= 0\),
\begin{align*}
f(z) = C_1 e^{\mu z} + C_2 e^{-\mu z} +
\begin{cases}
P_1 e^{k_1 z} + P_2 e^{k_2 z}, & k_1 \neq \pm \mu, k_2 \neq \pm \mu, \\
Q_1 z e^{k_1 z} + P_2 e^{k_2 z}, & k_1 = \pm \mu, k_2 \neq \pm \mu, \\
P_1 e^{k_1 z} + Q_2 z e^{k_2 z}, & k_1 \neq \pm \mu, k_2 = \pm \mu,
\end{cases}
\end{align*}
where  $\alpha(\not=0), \beta,C_1, C_2, A,B$ are constants, $\mu = \sqrt{-\dfrac{\mathcal{A}_3}{\mathcal{A}_2}},k_1 = \alpha + A,k_2 = \alpha - A $,
\begin{align*}
&P_1 = \dfrac{(b_2 \omega_2 - a_2) e^{\beta + B}}{(\omega_2 - \omega_1)[\mathcal{A}_2 (\alpha + A)^2 + \mathcal{A}_3]} ,P_2 = \dfrac{(a_2 - b_2 \omega_1) e^{\beta - B}}{(\omega_2 - \omega_1)[\mathcal{A}_2 (\alpha - A)^2 + \mathcal{A}_3]}, \\
&Q_1 = \dfrac{(b_2 \omega_2 - a_2) e^{\beta + B}}{2\mathcal{A}_2 (\omega_2 - \omega_1)(\alpha + A)},Q_2 = \dfrac{(a_2 - b_2 \omega_1) e^{\beta - B}}{2\mathcal{A}_2 (\omega_2 - \omega_1)(\alpha - A)}.
\end{align*}
		\item[\rm{(4)}] $\deg g>1$, \(\mathcal{A}_2 = \mathcal{A}_1= 0\), $ f(z)=\frac{ (a_2 - b_2\omega_1)e^{g - p}}{\mathcal{A}_3(\omega_2 - \omega_1)}$ or $ f(z)=\frac{ (b_2\omega_2 - a_2)e^{g(z)+p(z)}}{\mathcal{A}_3(\omega_2 - \omega_1)}$ , where $p$ is a polynomial of $\deg p=\deg g-1$.
	\end{itemize}
\end{thm}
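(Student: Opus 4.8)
The strategy is to factorise the left-hand side, reduce \eqref{2.1} to two exponential equations, and then use $r(\mathcal A)=2$ to extract a linear differential/difference relation for $f$ whose shape — and hence the admissible degree of $g$ — is governed by the vanishing pattern of $\mathcal A_1,\mathcal A_2,\mathcal A_3$. Since $\omega\neq\pm1$ the numbers $\omega_1,\omega_2$ are distinct and satisfy $\omega_1+\omega_2=-2\omega$, $\omega_1\omega_2=1$, so
\[
(L_1(f))^{2}+2\omega L_1(f)L_2(f)+(L_2(f))^{2}=\bigl(\omega_1 L_2(f)-L_1(f)\bigr)\bigl(\omega_2 L_2(f)-L_1(f)\bigr).
\]
Writing $U_i:=\omega_i L_2(f)-L_1(f)$ gives $U_1U_2=e^{2g}$; as $f$ is entire, each $U_i$ is entire and zero-free, hence $U_i=e^{p_i}$ with $p_i$ entire and $p_1+p_2=2g$ after absorbing a constant into $p_1$. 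Because $\rho_2(f)<1$ the shift $f(z+c)$ satisfies $T(r,f(z+c))=T(r,f)+S(r,f)$, so $\rho_2(U_i)\le\rho_2(f)<1$ and thus $\rho(p_i)<1$. Since $\omega_1\neq\omega_2$ we may invert to get $L_2(f)=\dfrac{e^{p_1}-e^{p_2}}{\omega_1-\omega_2}$ and $L_1(f)=\dfrac{\omega_2e^{p_1}-\omega_1e^{p_2}}{\omega_1-\omega_2}$. Finally, eliminating one at a time $f(z+c)$, $f''$ and $f$ from the pair $L_1(f),L_2(f)$,
\begin{gather*}
b_2L_1(f)-a_2L_2(f)=\mathcal A_2f''+\mathcal A_3f,\qquad b_1L_1(f)-a_1L_2(f)=-\mathcal A_2f(z+c)-\mathcal A_1f,\\
b_0L_1(f)-a_0L_2(f)=-\mathcal A_1f''-\mathcal A_3f(z+c),
\end{gather*}
each left-hand side being an explicit $\mathbb C$-combination of $e^{p_1}$ and $e^{p_2}$.

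\emph{Case $\mathcal A_2\neq0$.} The first identity is a constant-coefficient ODE $f''+\tfrac{\mathcal A_3}{\mathcal A_2}f=\lambda_1e^{p_1}+\lambda_2e^{p_2}$, and the second gives $f(z+c)=-\tfrac1{\mathcal A_2}\bigl(\mathcal A_1f+\mu_1e^{p_1}+\mu_2e^{p_2}\bigr)$. Every entire solution of the ODE is $C_1e^{\nu z}+C_2e^{-\nu z}$ (with $\nu^{2}=-\mathcal A_3/\mathcal A_2$, replaced by a degree-$\le1$ polynomial when $\mathcal A_3=0$) plus a particular part carried by $e^{p_1},e^{p_2}$; substituting this into the shift relation and comparing exponential types forces, by a Borel/Picard argument, that $p_i(z+c)-p_i(z)$ is constant. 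Together with $\rho(p_i)<1$ this makes each $p_i$ a polynomial of degree $\le1$, hence $g=\alpha z+\beta$; solving the ODE with the resulting linear right-hand side — and separating the resonant subcases in which an exponent $\alpha\pm A$ agrees with a characteristic exponent — produces exactly the formulas of (2) (when $\mathcal A_3=0$) and (3) (when $\mathcal A_3\neq0$), the remaining constants being fixed by re-inserting into $U_1=e^{p_1}$, $U_2=e^{p_2}$.

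\emph{Case $\mathcal A_2=0$.} Since $r(\mathcal A)=2$, one of $\mathcal A_1,\mathcal A_3$ is nonzero, and one of the identities above collapses to $\mathcal A_3f$ (or $-\mathcal A_1f$) being a combination of $e^{p_1},e^{p_2}$, so $f=c_1e^{p_1}+c_2e^{p_2}$. If the remaining reduced operator still involves $f''$ or $f(z+c)$ nontrivially — which happens unless $a_1=b_1=0$ — then plugging $f=c_1e^{p_1}+c_2e^{p_2}$ into it and matching $f''$ or $f(z+c)$ against $f$ forces, again by Borel, $p_i''+(p_i')^{2}$ (resp. $p_i(z+c)-p_i(z)$) to be constant; with $\rho(p_i)<1$ this yields $\deg p_i\le1$, hence $g=\alpha z+\beta$, and the two alternatives of (1) according as one works with $\mathcal A_3$ or $\mathcal A_1$. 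The exceptional configuration is $\mathcal A_1=\mathcal A_2=0$; one checks that, with $r(\mathcal A)=2$, this forces $a_1=b_1=0$, so $L_1,L_2$ are pure difference operators and $c_1,c_2$ take the stated values. Then $f(z+c)=c_1e^{p_1(z+c)}+c_2e^{p_2(z+c)}$ must also be a combination of $e^{p_1},e^{p_2}$; since $\deg g>1$ makes at least one of $p_1(z+c)-p_1(z)$, $p_2(z+c)-p_2(z)$ non-constant, Picard's theorem forces one of $c_1,c_2$ to vanish, so $f=ce^{p_1}$ (or $ce^{p_2}$). Re-inserting this into $U_1=e^{p_1}$, $U_2=e^{p_2}$ and comparing exponents gives a relation of the type $2(p_1-g)=p_1(z+c)-p_1(z)+\mathrm{const}$, which forces $\deg(p_1-g)=\deg g-1$; setting $p:=\pm(p_1-g)$ yields precisely the two forms in (4).

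\textbf{Expected main difficulty.} The core of the argument is the Borel/Nevanlinna bookkeeping needed (a) to upgrade ``$\rho(p_i)<1$'' to ``$p_i$ is a polynomial of the correct degree'', and (b) to show that $\deg g>1$ is incompatible with the survival of an $f''$ term in the reduced equation, so that only the degenerate difference case $a_1=b_1=0$ can carry a higher-degree $g$. Both steps hinge on carefully separating the shifted exponentials $e^{p_i(z+c)}$ from the unshifted $e^{p_i}$ and from the characteristic exponentials $e^{\pm\nu z}$, and then invoking Borel's theorem (or Steinmetz's lemma) on linear combinations of exponentials of entire functions; the hypothesis $\rho_2(f)<1$ is exactly what legitimises the shift comparisons (and, via the fact that a non-constant periodic entire function has order at least one, what turns ``$p_i(z+c)-p_i(z)$ constant'' into ``$p_i$ linear''). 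The remaining work — solving the constant-coefficient ODEs and tracking the numerous resonant subcases and explicit constants — is routine but lengthy.
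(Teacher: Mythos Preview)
Your overall architecture matches the paper's: factorise via $\omega_1,\omega_2$, invert to express $L_1,L_2$ as combinations of $e^{p_1},e^{p_2}$, and then use the three elimination identities. But the execution of the case $\mathcal A_2\neq0$ has a real gap. You propose to \emph{solve} the ODE $\mathcal A_2 f''+\mathcal A_3 f=\lambda_1e^{p_1}+\lambda_2e^{p_2}$ and then substitute the resulting $f$ into the shift relation. At this stage, however, $p_1,p_2$ are only known to be entire of order $<1$; the particular solution of $f''+\mu^2 f=\lambda e^{p(z)}$ for a general entire $p$ is \emph{not} a scalar multiple of $e^{p(z)}$ but an indefinite integral $\int e^{p(t)\pm\mu t}\,dt$, which you cannot match ``by exponential type'' against anything. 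The paper avoids this entirely: it differentiates the relation $\mathcal A_1 f-\mathcal A_2 f_c=\cdots$ twice, combines with the shift of $\mathcal A_3 f+\mathcal A_2 f''=\cdots$ and with the third identity, and thereby eliminates $f,f'',f_c,f''_c$ \emph{algebraically}, obtaining a single relation
\[
B_1e^{p}+B_2e^{-p}+B_3e^{p_c+g_c-g}+B_4e^{-p_c+g_c-g}=0
\]
(with $p=\tfrac12(p_1-p_2)$) whose coefficients are differential polynomials in $g,p$ and hence small. Borel's theorem then applies directly, with no ODE ever solved until $g,p$ are already known to be linear.

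A second, related omission: your Borel step concludes only that $p_i(z+c)-p_i(z)$ is constant. The full Borel dichotomy on the displayed relation yields \emph{four} possibilities, including the cross-relations $p+p_c-g_c+g=\mathrm{const}$ and $-p-p_c-g_c+g=\mathrm{const}$. In the paper these two subcases force $B_2=B_3=0$ (resp.\ $B_1=B_4=0$), which in turn forces $a_i-b_i\omega_j=0$ for suitable $i,j$ and hence $\mathcal A_1=\mathcal A_2=0$; this is how conclusion~(4) arises and why it is tied to $\mathcal A_1=\mathcal A_2=0$. Your organisation --- splitting on $\mathcal A_2$ first and treating $\mathcal A_1=\mathcal A_2=0$ as an independent subcase reached via a vague ``Picard forces one of $c_1,c_2$ to vanish'' --- misses that these cross-relations must be ruled out (by contradiction with $\mathcal A_2\neq0$) inside the $\mathcal A_2\neq0$ analysis, and must be the \emph{source} of case~(4) inside the $\mathcal A_2=0$ analysis.
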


\begin{rem}
 Theorem \ref{th1} removes   the condition  \(\rho(f)\leq 1\) in assertion \rm{(ii)} of Theorem \ref{thB},
and finds out all possible forms of entire solution $f$ with  $\rho_2(f)<1$ when $\alpha z+\beta$ in \eqref{1.8} is replaced by a  polynomial $g(z)$ with $\deg g >1$. Therefore, Theorem \ref{th1} answers the parts related to Theorem \ref{thB} in Question \ref{Q1} and Question \ref{Q2}.
\end{rem}

\begin{rem}
 Obviously, Theorem \ref{th1} improves   the condition  \(\rho(f)<\infty\)  in Theorem \ref{thB} to $\rho_2(f)<1$.
\end{rem}

    \begin{proof}[Proof of Theorem \ref{th1}]From \eqref{2.1}, it is easy to see that $f$ is transcendental. By the using the \cite[Lemma 1]{Liu1} to \eqref{2.1}, we have

\begin{align}
\label{2.2}
L_1(f) = a_0 f + a_1 f'' + a_2 f_c = \frac{\omega_2 e^{g + p} - \omega_1 e^{g - p}}{\omega_2 - \omega_1},
L_2(f) = b_0 f + b_1 f''+ b_2 f_c = \frac{e^{g + p} - e^{g - p}}{\omega_2 - \omega_1},
\end{align}
where \(f_c\) denotes \(f(z + c)\), $\omega_1=-\omega+\sqrt{\omega^2 - 1}, \omega_2=-\omega-\sqrt{\omega^2 - 1},$ and \(p\) is an entire function.  By the  \cite[Theorem 5.1]{halburd2014} and $\rho_2(f)<1$, we get $\rho(p)<1$.

The following equations can be obtained through direct elimination calculation on the equations \eqref{2.2},
\begin{align}
\label{2.3}
-\mathcal{A}_1f''-\mathcal{A}_3f_c=\frac{(b_0\omega_2 - a_0)e^{g + p} + (a_0 - b_0\omega_1)e^{g - p}}{\omega_2 - \omega_1},
\end{align}

\begin{align}
\label{2.4}
\mathcal{A}_1f-\mathcal{A}_2f_c=\frac{(b_1\omega_2 - a_1)e^{g + p} + (a_1 - b_1\omega_1)e^{g - p}}{\omega_2 - \omega_1},
\end{align}

\begin{align}
\label{2.5}
\mathcal{A}_3f+\mathcal{A}_2f''=\frac{(b_2\omega_2 - a_2)e^{g + p} + (a_2 - b_2\omega_1)e^{g - p}}{\omega_2 - \omega_1}.
\end{align}

Differentiating the equation \eqref{2.4} twice yields
\begin{align}
\label{2.6}
\mathcal{A}_1f''-\mathcal{A}_2f''_c=\frac{(b_1\omega_2 - a_1)[(g'+p')^2+g''+p'']e^{g + p} + (a_1 - b_1\omega_1)[(g'-p')^2+g''-g'']e^{g - p}}{\omega_2 - \omega_1}.
\end{align}
Adding  \eqref{2.6} and the shift of \eqref{2.5} together gives
\begin{align}
\label{2.7}
  \mathcal{A}_3f_c+\mathcal{A}_1f''&=   \frac{(b_1\omega_2 - a_1)[(g'+p')^2+g''+p'']e^{g + p} + (a_1 - b_1\omega_1)[(g'-p')^2+g''-g'']e^{g - p}}{\omega_2 - \omega_1}\\ \nonumber
 &+ \frac{(b_2\omega_2 - a_2)e^{g_c + p_c} + (a_2 - b_2\omega_1)e^{g_c - p_c}}{\omega_2 - \omega_1}.
\end{align}
Adding  \eqref{2.3} and  \eqref{2.7} together gives
\begin{align}
\label{2.8}
B_1 e^{p} + B_2 e^{-p} + B_3 e^{p_c + g_c - g} + B_4 e^{-p_c + g_c - g} = 0,
\end{align}
where
$B_1 = b_0\omega_2 - a_0 + (b_1\omega_2 - a_1)[(g'+p')^2+g''+p''],
B_2 = a_0 - b_0\omega_1 + (a_1 - b_1\omega_1)[(g'-p')^2+g''-p''],
B_3 = b_2\omega_2 - a_2,
B_4 = a_2 - b_2\omega_1.$

In order to use \cite[Theorem 1.51]{yang2003} to \eqref{2.8}, we next  consider whether $|a_2|+|b_2|=0$.

\setcounter{case}{0}
	\begin{case}
   \rm{ $|a_2|+|b_2|=0$.
 It is easy to see  $\mathcal{A}_1\not=0$ from \(r(\mathcal{A}) = 2\)  and $  \mathcal{A}_2=\mathcal{A}_3=0$. Then combining \eqref{2.6} and \eqref{2.3}  implies
\begin{align}
\label{2.9}
    \{(b_1\omega_2 - a_1)[(g'+p')^2+g''+p'']+(b_0\omega_2-a_0)\}e^{2p}+(a_1 - b_1\omega_1)[(g'-p')^2+g''-p'']\\ \nonumber
    +(a_0 - b_0\omega_1)=0.
\end{align}

If $p$ is a constant, then \eqref{2.9} gives
\begin{align}
\label{2.10}
(b_1\omega_2-a_1)e^{2p}+(a_1-b_1\omega_1)=(b_0\omega_2-a_0)e^{2p}+(a_0-b_0\omega_1)=0.
\end{align}
If $b_1\omega_2-a_1=0$, then \eqref{2.10} gives  $a_1-b_1\omega_1=0$. Substituting these into the equation \eqref{2.4} leads to the conclusion that
$f$
 is a constant, which is impossible. Thus $b_1\omega_2-a_1\not=0$. Similarly $a_0-b_0\omega_2\not=0$. Thus, \eqref{2.10} gives $
\frac{a_1 - b_1\omega_1}{a_1 - b_1\omega_2} = \frac{a_0 - b_0\omega_1}{a_0 - b_0\omega_2} = e^{2p},$
which implies \((a_0b_1 - a_1b_0)(\omega_2 - \omega_1) = 0\), that contradicts \(\mathcal{A}_1\not=0\). Thus  $p$ is not a constant.

Now $p$ is non-constant, then  \eqref{2.9} gives
\begin{align}
\label{2.11}
(b_1\omega_2 - a_1)[(g'+p')^2+g''+p'']+(b_0\omega_2-a_0)=0,\\  \nonumber
(a_1 - b_1\omega_1)[(g'-p')^2+g''-p'']+(a_0 - b_0\omega_1)=0.
\end{align}
 If $b_1\omega_2-a_1=0$, then  $a_0-b_0\omega_2=0$, which gives  \(a_0b_1 - a_1b_0= 0\), that contradicts \(\mathcal{A}_1\not=0\). Thus $b_1\omega_2-a_1\not=0$. Similarly $a_1 - b_1\omega_1\not=0$.
From \eqref{2.11}, we get
\begin{align}
\label{2.12}
2(g')^2+2(p')^2+2g''=\frac{a_0 - b_0\omega_2}{b_1\omega_2 - a_1}+\frac{a_0 - b_0\omega_1}{b_1\omega_1 - a_1}, \\ \nonumber
4g'p'+2p''=\frac{a_0 - b_0\omega_2}{b_1\omega_2 - a_1}-\frac{a_0 - b_0\omega_1}{b_1\omega_1 - a_1}.
\end{align}
We claim $\deg g=1$. Otherwise, if $\deg g>1$, then
the first equation of  \eqref{2.12} implies $p$ is a polynomial and $\deg g=\deg p>1$. Combining this with the second equation of \eqref{2.12}, we get $\deg g'p'=\deg p''$, which is  impossible. Thus $\deg g=1$ and the first equation of  \eqref{2.12} implies $p$ is a polynomial of $\deg p=1$.

Since $p,g$ are polynomials and $\mathcal{A}_2=0$,  \eqref{2.4} implies $f$  is of  finite order.
By the  assertion $\rm{(i)}$ of Theorem \ref{thB}, we get
Theorem \eqref{2.1}-(1).
    }
    \end{case}
\begin{case}\rm{
$|a_2|+|b_2|\not=0$. Since $\omega_1\not=\omega_2$, thus  $B_3=0 $ and $B_4=0$ at the same time  is impossible.

First, let's consider the situation $p$ is a constant. We claim $\deg g=1$. Otherwise,  if $\deg g>1$, then  $\deg (g_c-g)>1$. Therefore, from \eqref{2.8}, we get
\begin{align*}
B_1e^{p}+B_2e^{-p}=0, B_3e^{p}+B_4e^{-p}=0.
\end{align*}
That implies $B_1B_4-B_2B_3=\{b_0\omega_2 - a_0 + (b_1\omega_2 - a_1)[(g')^2+g'']\}(a_2 - b_2\omega_1)-\{a_0 - b_0\omega_1 + (a_1 - b_1\omega_1)[(g')^2+g'']\}(b_2\omega_2 - a_2)=0$. By comparing the coefficients of the polynomials, we can obtain
\begin{align*}
    (b_0 \omega_2 - a_0)(a_2 - b_2 \omega_1) &= (a_0 - b_0 \omega_1)(b_2 \omega_2 - a_2), \\
        (b_1 \omega_2 - a_1)(a_2 - b_2 \omega_1) &= (a_1 - b_1 \omega_1)(b_2 \omega_2 - a_2).
\end{align*}
It implies  \(\mathcal{A}_2=\mathcal{A}_3=0\).  It is easy to see  $\mathcal{A}_1\not=0$  by \(r(\mathcal{A}) = 2\). This scenario is consistent with Case 1, and we can obtain that $p$ is not a constant, this contradicts our initial hypothesis. Thus $\deg g=1$.

Now $\deg g=1$. If $\mathcal{A}_2=0$, \eqref{2.4} implies $f$ is of finite order. By the  the  assertion $\rm{(i)}$ of  Theorem \ref{thB}, we get Theorem \ref{th1}-(1).

We next consider $\mathcal{A}_2\not=0$. Let  $ g=\alpha z+\beta$, where \(\alpha(\not=0), \beta\) are constants.
If $\mathcal{A}_3=0$, then \eqref{2.5} becomes
\begin{align*}
\mathcal{A}_2f''=\frac{(b_2\omega_2 - a_2)e^{\alpha z+\beta+ p} + (a_2 - b_2\omega_1)e^{\alpha z+\beta- p}}{\omega_2 - \omega_1}.
\end{align*}
 That is \begin{align*}
      f(z) = \frac{e^{\alpha z + \beta}}{\alpha^2 \mathcal{A}_2 (\omega_2 - \omega_1)} \left[ (b_2 \omega_2 - a_2) e^{p} + (a_2 - b_2 \omega_1) e^{-p} \right] + C_3 z + C_4,
 \end{align*}
where $C_3,C_4$ are constants. This is conclusion $(2)$ of Theorem \ref{th1}, in which $A=0,B=p$.

If $\mathcal{A}_3\not=0$, then \eqref{2.5} becomes
\begin{align}
\label{sub2.1}
\mathcal{A}_3f+\mathcal{A}_2f''=\frac{(b_2\omega_2 - a_2)e^{\alpha z+\beta + p} + (a_2 - b_2\omega_1)e^{\alpha z+\beta- p}}{\omega_2 - \omega_1}.
\end{align}
Solving this equation gives
$f(z) = C_1 e^{k z} + C_2 e^{-k z} + \frac{D e^{\beta}}{\mathcal{A}_2} \cdot
\begin{cases}
\displaystyle \frac{e^{\alpha z}}{\alpha^2 + k^2}, & \alpha \neq \pm k, \\
\displaystyle \frac{z e^{\alpha z}}{2\alpha}, & \alpha = \pm k,
\end{cases}$\\
where $k = \sqrt{-\frac{\mathcal{A}_3}{\mathcal{A}_2}}$, $D = \frac{1}{\omega_2 - \omega_1} \left[ (b_2 \omega_2 - a_2) e^{p} + (a_2 - b_2 \omega_1) e^{-p} \right]$,  $C_1,C_2$ are constants. This is conclusion $(3)$ of Theorem \ref{th1}, in which $A=0,B=p$.

Following the previous discussion, we now proceed to analyze the case in which  $p$ is not a constant.
It is easy to see $\rho(B_i)<1,i=1,2,3,4$, by using \cite[Theorem 1.51]{yang2003} to \eqref{2.8} yields that one of
$p-p_c-g_c+g,p+p_c-g_c+g,-p-p_c-g_c+g,-p+p_c-g_c+g$ must be a constant.

\setcounter{subsection}{2}
			\setcounter{subcase}{0}
			\renewcommand{\thesubcase}{\arabic{subsection}.\arabic{subcase}}

\begin{subcase}\rm{$p-p_c-g_c+g=b$ is a constant. Applying \cite[Theorem 3.1]{Lu} to $p+g-(p+g)_c=b$ gives $p+g$ is a polynomial of degree 1. Thus $\deg g= \deg p$.
\eqref{2.8} becomes
$B_1 e^b + B_3 + (B_2 e^b + B_4 e^{2p - 2p_c}) e^{-2p} = 0,$
which implies
\begin{align}
\label{2.13}
B_1 e^b + B_3 =0, B_2 e^b + B_4 e^{2p - 2p_c} =0.
\end{align}

If $\deg p=\deg g>1$, then $g'+p'$ is a constant yields
 $g'-p'$ is not a constant, otherwise $g'$ is a constant, which contradicts with $\deg g>1$. Therefore  \eqref{2.13} gives $B_2 = a_0 - b_0\omega_1 + (a_1 - b_1\omega_1)[(g'-p')^2+g''-p'']
=0,~B_4 = a_2 - b_2\omega_1=0$. This implies $ a_0 - b_0 \omega_1 = a_1 - b_1 \omega_1 = a_2 - b_2 \omega_1 = 0,$ that contradicts $r(\mathcal{A}) = 2$. Thus  $\deg p=\deg g=1$.

 Let $ g=\alpha z+\beta$ and $p=Az+B$, where \(\alpha(\not=0), A(\not=0), B, \beta\) are constants. If $\mathcal{A}_2=0$, \eqref{2.4} implies $f$ has finite order. By the  the assertion $\rm{(i)}$ of  Theorem \ref{thB}, we get Theorem \ref{th1}-(1).

We next consider $\mathcal{A}_2\not=0$.
 If $\mathcal{A}_3=0$, then \eqref{2.5} becomes
\begin{align}
\label{sub2.2}
\mathcal{A}_2f''=\frac{(b_2\omega_2 - a_2)e^{\alpha z+\beta+ Az+B} + (a_2 - b_2\omega_1)e^{\alpha z+\beta- (Az+B)}}{\omega_2 - \omega_1}.
\end{align}
 That is \begin{align*}
   f(z) = \frac{1}{\mathcal{A}_2 (\omega_2 - \omega_1)}\cdot
\begin{cases}
\left( A_1 e^{(\alpha + A)z} + A_2 e^{(\alpha - A)z} \right) + C_3 z + C_4, & \alpha \neq \pm A, \\
\left( D_1 e^{2A z} + D_2 z^2 \right) + C_3 z + C_4, & \alpha = A, \\
\left( C_1 z^2 + C_2 e^{-2A z} \right) + C_3 z + C_4, & \alpha = -A,
\end{cases}
 \end{align*}
where $C_3,C_4$ are constants and
\begin{align*}
    &A_1 = \dfrac{(b_2 \omega_2 - a_2) e^{\beta + B}}{(\alpha + A)^2} ,A_2 = \dfrac{(a_2 - b_2 \omega_1) e^{\beta - B}}{(\alpha - A)^2}, \\
&D_1 = \dfrac{(b_2 \omega_2 - a_2) e^{\beta + B}}{4A^2} ,D_2 = \dfrac{(a_2 - b_2 \omega_1) e^{\beta - B}}{2}, \\
&C_1 = \dfrac{(b_2 \omega_2 - a_2) e^{\beta + B}}{2},C_2 = \dfrac{(a_2 - b_2 \omega_1) e^{\beta - B}}{4A^2}.
\end{align*}
This is conclusion $(2)$ of Theorem \ref{th1}.

If $\mathcal{A}_3\not=0$, then \eqref{2.5} becomes
\begin{align}
\label{sub 2.16}
\mathcal{A}_3f+\mathcal{A}_2f''=\frac{(b_2\omega_2 - a_2)e^{\alpha z+\beta + Az+B} + (a_2 - b_2\omega_1)e^{\alpha z+\beta- (Az+B)}}{\omega_2 - \omega_1}.
\end{align}
Solving this equation gives
\begin{align*}
f(z) = C_1 e^{\mu z} + C_2 e^{-\mu z} +
\begin{cases}
P_1 e^{k_1 z} + P_2 e^{k_2 z}, & k_1 \neq \pm \mu, k_2 \neq \pm \mu, \\
Q_1 z e^{k_1 z} + P_2 e^{k_2 z}, & k_1 = \pm \mu, k_2 \neq \pm \mu, \\
P_1 e^{k_1 z} + Q_2 z e^{k_2 z}, & k_1 \neq \pm \mu, k_2 = \pm \mu,
\end{cases}
\end{align*}
where
\begin{align*}
&\mu = \sqrt{-\dfrac{\mathcal{A}_3}{\mathcal{A}_2}},k_1 = \alpha + A,k_2 = \alpha - A ,\\
&P_1 = \dfrac{(b_2 \omega_2 - a_2) e^{\beta + B}}{(\omega_2 - \omega_1)[\mathcal{A}_2 (\alpha + A)^2 + \mathcal{A}_3]} ,P_2 = \dfrac{(a_2 - b_2 \omega_1) e^{\beta - B}}{(\omega_2 - \omega_1)[\mathcal{A}_2 (\alpha - A)^2 + \mathcal{A}_3]}, \\
&Q_1 = \dfrac{(b_2 \omega_2 - a_2) e^{\beta + B}}{2\mathcal{A}_2 (\omega_2 - \omega_1)(\alpha + A)},Q_2 = \dfrac{(a_2 - b_2 \omega_1) e^{\beta - B}}{2\mathcal{A}_2 (\omega_2 - \omega_1)(\alpha - A)}.
\end{align*}
This is conclusion $(3)$ of Theorem \ref{th1}.
}
\end{subcase}
\begin{subcase}\rm{
 $p+p_c-g_c+g=b$ is a constant.
 Applying \cite[Theorem 3.1]{Lu} to $p+p_c=g_c-g+b$ gives $p$ is a polynomial. Thus,  $\deg p = \deg g-1$ and  \eqref{2.8} becomes
 $B_1 e^{b} + B_4 + B_2 e^{-2p+b} + B_3 e^{2p_c} = 0.$ Then, using \cite[Theorem 1.51]{yang2003} to this equation, which  implies $B_2=B_3=0$, $B_1 e^{b} + B_4=0$. Therefore,
$a_0 - b_0 \omega_1 = a_1 - b_1 \omega_1 = b_1 \omega_2 - a_1 = b_2 \omega_2 - a_2 = 0.$
This gives \(\mathcal{A}_2 = \mathcal{A}_1= 0\), so \(\mathcal{A}_3\neq 0\) by \(r(\mathcal{A}) = 2\). Subsisting this into \eqref{2.5} gives
\begin{align*}
    f(z)=\frac{ (a_2 - b_2\omega_1)e^{g - p}}{\mathcal{A}_3(\omega_2 - \omega_1)}.
\end{align*}
This is conclusion $\rm{(4)}$ of Theorem \ref{th1}.
}
\end{subcase}

\begin{subcase}
\rm{ $-p-p_c-g_c+g=b$ is a constant.
As the same as Subcase 2.3, we get $p$ is a polynomial and $\deg p= \deg g-1$. Then \eqref{2.8} becomes
 $B_1 e^{2p+b} + B_2e^b+B_3+B_4 e^{-2p_c} = 0.$
Using  \cite[Theorem 1.51]{yang2003} to this equation, which  implies $B_1=B_4=0$, $B_2 e^{b} + B_3=0$. Therefore, $a_2 - b_2 \omega_1 = a_1 - b_1 \omega_1 = b_0 \omega_2 - a_0 = b_1 \omega_2 - a_1 = 0.$
This gives \(\mathcal{A}_2 = \mathcal{A}_1= 0\), so \(\mathcal{A}_3\neq 0\) by \(r(\mathcal{A}) = 2\). Subsisting this into \eqref{2.5} gives
\begin{align*}
    f(z)=\frac{ (b_2\omega_2 - a_2)e^{g(z)+p(z)}}{\mathcal{A}_3(\omega_2 - \omega_1)}.
\end{align*}
This is conclusion $\rm{(4)}$ of Theorem \ref{th1}.
}
\end{subcase}

\begin{subcase}
\rm{ $-p+p_c-g_c+g=b$ is a constant. Applying \cite[Theorem 3.1]{Lu} to
$g-p-(g-p)_c=b$ gives $g-p$ is a polynomial. Thus, $\deg (g-p)=1$ and $p$ is a polynomial of $\deg g=\deg p$. The remaining proof is similar to Subcase 2.1, we get  the conclusion $(3)$ of Theorem \ref{th1}.

}
\end{subcase}

}
\end{case}
In summary, Theorem \ref{th1} is proved completely.
\end{proof}

\section{quadratic trinomial Fermat  type \texorpdfstring{$q$}{q}-difference differential equations }
\label{s3}
In this section, the existence of solutions to the Fermat  type  $q$-difference differential equation \((Q_1(f))^2 + 2\omega Q_1(f)L_2(f) + (Q_2(f))^2 = e^{g(z)}\) is studied, thereby solving the parts related to Theorem \ref{thC} in Question \ref{Q1} and Question \ref{Q2}.
\begin{thm}
	\label{th2}
	Assume that  $Q_1, Q_2$ are defined in \eqref{Q}, $q$ and $\omega $ are  non-zero constants such that \(q^n\neq \pm 1,\) $n\in \mathbb{N}^+$, \( \omega \neq \pm 1\) and $\omega_1=-\omega+\sqrt{\omega^2 - 1}, \omega_2=-\omega-\sqrt{\omega^2 - 1},$ $g$ is a non-constant polynomial, and \(r(\mathcal{A}) = 2\). If the equation
\begin{equation}
\label{3.1}
(Q_1(f))^2 + 2\omega Q_1(f)Q_2(f) + (Q_2(f))^2 = e^{2g(z)}
\end{equation}
admits an entire solution \(f\) of $\rho_2(f)<\infty$, then $ g=\alpha z+\beta$, where \(\alpha(\not=0), \beta\) are constants, and  one the following
cases holds.
	\begin{itemize}
		\item[\rm{(1)}]  \(\mathcal{A}_2= 0\), $f(z) = C_1 e^{(A + \alpha)z} + C_2 e^{(-A + \alpha) z}$,
        \(C_1=\frac{(b_2\omega_2 - a_2)e^{B+\beta}}{(\omega_2 - \omega_1)\mathcal{A}_3}\) and  \(C_2=\frac{(a_2 - b_2\omega_1)e^{-B+\beta}}{(\omega_2 - \omega_1)\mathcal{A}_3}\) or \(C_1=\frac{(b_1\omega_2 - a_1)e^{B+\beta}}{(\omega_2 - \omega_1)\mathcal{A}_1}\) and \(C_2=\frac{(a_1 - b_1\omega_1)e^{-B+\beta}}{(\omega_2 - \omega_1)\mathcal{A}_1}\), where $ A, B$ are arbitrary constants.
    \item [\rm{(2)}]\(\mathcal{A}_2\not= 0\) and \(\mathcal{A}_3= 0\),  $f(z) = \frac{(b_2 \omega_2 - a_2) e^{(\alpha + A)z + \beta + B}}{\mathcal{A}_2 (\omega_2 - \omega_1) (\alpha + A)^2} + C_1 z + C_2$ or $f(z) = \frac{(a_2 - b_2 \omega_1) e^{(\alpha - A)z + \beta - B}}{\mathcal{A}_2 (\omega_2 - \omega_1) (\alpha - A)^2} + C_1 z + C_2$,
where $A,B,C_1,C_2$ are constants and $\alpha-A=q(\alpha+A)$ or $q(\alpha-A)=\alpha+A$, respectively.

 \item [\rm{(3)}] \(\mathcal{A}_2\not= 0\) and \(\mathcal{A}_3\not= 0\),
\begin{align*}
f(z) = C_1 e^{\mu z} + C_2 e^{-\mu z} +
\begin{cases}
P_1 e^{k_1 z} + P_2 e^{k_2 z}, & k_1 \neq \pm \mu, k_2 \neq \pm \mu, \\
Q_1 z e^{k_1 z} + P_2 e^{k_2 z}, & k_1 = \pm \mu, k_2 \neq \pm \mu, \\
P_1 e^{k_1 z} + Q_2 z e^{k_2 z}, & k_1 \neq \pm \mu, k_2 = \pm \mu,
\end{cases}
\end{align*}
where  $C_1, C_2, A,B$ are constants, $\mu = \sqrt{-\dfrac{\mathcal{A}_3}{\mathcal{A}_2}},k_1 = \alpha + A,k_2 = \alpha - A $,
\begin{align*}
&P_1 = \dfrac{(b_2 \omega_2 - a_2) e^{\beta + B}}{(\omega_2 - \omega_1)[\mathcal{A}_2 (\alpha + A)^2 + \mathcal{A}_3]} ,P_2 = \dfrac{(a_2 - b_2 \omega_1) e^{\beta - B}}{(\omega_2 - \omega_1)[\mathcal{A}_2 (\alpha - A)^2 + \mathcal{A}_3]}, \\
&Q_1 = \dfrac{(b_2 \omega_2 - a_2) e^{\beta + B}}{2\mathcal{A}_2 (\omega_2 - \omega_1)(\alpha + A)},Q_2 = \dfrac{(a_2 - b_2 \omega_1) e^{\beta - B}}{2\mathcal{A}_2 (\omega_2 - \omega_1)(\alpha - A)}.
\end{align*}

	\end{itemize}
\end{thm}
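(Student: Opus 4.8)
The plan is to run the proof of Theorem~\ref{th1} essentially line by line, replacing the shift $z\mapsto z+c$ by the dilation $z\mapsto qz$; the two features that change are that differentiating $f(qz)$ produces an extra factor $q^{2}$ in front of $f''(qz)$, and that for $\deg g\ge1$ the polynomial $g(qz)-g(z)$ has the \emph{same} degree as $g$ (since $q^{n}\neq1$ for all $n\ge1$) rather than one less. Since the right side of \eqref{3.1} is zero-free, $f$ is transcendental; applying \cite[Lemma 1]{Liu1} to \eqref{3.1} I would write
\[
Q_{1}(f)=\frac{\omega_{2}e^{g+p}-\omega_{1}e^{g-p}}{\omega_{2}-\omega_{1}},\qquad
Q_{2}(f)=\frac{e^{g+p}-e^{g-p}}{\omega_{2}-\omega_{1}}
\]
for an entire $p$, and then, using \cite[Theorem 5.1]{halburd2014} together with $T(r,f(qz))=T(|q|r,f)+O(1)$ and $\rho_{2}(f)<\infty$, force $p$ to be of finite (indeed small) order, so that $e^{\pm p}$ is dominated by $e^{\pm g}$ whenever $\deg g\ge1$. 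Eliminating $f$, $f''$, $f(qz)$ pairwise exactly as in \eqref{2.3}--\eqref{2.5} gives three relations; differentiating the analogue of \eqref{2.4} twice yields $\mathcal{A}_{1}f''-q^{2}\mathcal{A}_{2}f''(qz)$ on the left, so adding $q^{2}$ times the dilation of the analogue of \eqref{2.5} and then the analogue of \eqref{2.3} cancels $f''(qz)$ and $f''$ and expresses $(q^{2}-1)\mathcal{A}_{3}f(qz)$ as a combination of $e^{g\pm p}$ and $e^{g(qz)\pm p(qz)}$ with coefficients of order $<1$. Substituting $z\mapsto z/q$ gives $f$ itself as such a combination, and feeding it back into the analogue of \eqref{2.5} yields a relation purely among exponentials, the counterpart of \eqref{2.8}.

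Now I would split on $|a_{2}|+|b_{2}|$. If $|a_{2}|+|b_{2}|=0$, then $r(\mathcal{A})=2$ forces $\mathcal{A}_{1}\neq0$, $\mathcal{A}_{2}=\mathcal{A}_{3}=0$, and comparing polynomial degrees in the two identities obtained by separating the $e^{\pm p}$ parts (the first of the pair bounds $\deg p$, the second then kills the top-degree term) forces $\deg g=1$; then the analogue of \eqref{2.4} makes $f$ of finite order and Theorem~\ref{thC}(i) gives case (1). If $|a_{2}|+|b_{2}|\neq0$, I would apply \cite[Theorem 1.51]{yang2003} to the exponential relation: when $p$ is constant, $\deg g>1$ would make $e^{g(qz)-g(z)}$ transcendental, whence separating that factor produces two coefficient identities whose compatibility determinant collapses to $\mathcal{A}_{2}=\mathcal{A}_{3}=0$, contradicting (via the previous sub-branch) the constancy of $p$, so $\deg g=1$; when $p$ is non-constant, the lemma forces one of $\pm p\mp p(qz)-g(qz)+g(z)$ to be a constant $b$, and since $1\pm q^{n}\neq0$ for all $n$ a power-series comparison (the $q$-analogue of \cite[Theorem 3.1]{Lu}) shows $p$ is a polynomial whose degree is pinned to $\deg g$ by the relation. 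After that a degree count in the surviving Borel terms, together with $r(\mathcal{A})=2$, is meant to force $\deg g=1$.

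I expect this last degree count to be the main obstacle, and the only place where the $q$-difference case can genuinely differ from Theorem~\ref{thB}: the dangerous branch is the one in which the Borel lemma forces $\mathcal{A}_{1}=\mathcal{A}_{2}=0$, $\mathcal{A}_{3}\neq0$, so that the analogue of \eqref{2.5} gives $f=Ce^{g-p}$ --- in the shift setting this branch survived and produced the extra family Theorem~\ref{th1}(4), so recovering the stated conclusion requires showing such an $f$ is incompatible with the dilation relations unless $\deg g=1$. I would substitute $f=Ce^{g-p}$ into the analogues of \eqref{2.3} and \eqref{2.4} and compare the leading coefficients of the polynomial exponents after $z\mapsto qz$, using $q^{n}\neq\pm1$ (so $q^{d},1,q^{-d}$ are pairwise distinct for $d=\deg g$) to try to rule out the residual resonances; if this forces $d=1$ the theorem follows, and if it does not the statement would need an extra case, so this is the delicate point.

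Once $\deg g=1$ is established, write $g=\alpha z+\beta$ and, where relevant, $p=Az+B$. If $\mathcal{A}_{2}=0$, the analogue of \eqref{2.4} makes $f$ finite-order and Theorem~\ref{thC}(i) yields case (1). If $\mathcal{A}_{2}\neq0$, the analogue of \eqref{2.5} is the constant-coefficient ODE
\[
\mathcal{A}_{2}f''+\mathcal{A}_{3}f=\frac{(b_{2}\omega_{2}-a_{2})e^{g+p}+(a_{2}-b_{2}\omega_{1})e^{g-p}}{\omega_{2}-\omega_{1}},
\]
whose general solution is read off at once, with a factor $z\,e^{\,\cdot\,}$ in the resonant cases $\alpha\pm A=\pm\mu$, $\mu=\sqrt{-\mathcal{A}_{3}/\mathcal{A}_{2}}$, and a factor $z^{2}$ when $\mathcal{A}_{3}=0$. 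Finally --- and this is what makes the conclusions differ from Theorem~\ref{thB} --- each candidate $f$ must be fed back into the analogue of \eqref{2.4}; using the linear independence of $e^{\lambda z}$ for distinct $\lambda$ and matching the exponents $\alpha\pm A$ of $f$ against the dilated exponents $q(\alpha\pm A)$ of $f(qz)$, one finds that when $\mathcal{A}_{3}=0$ one of the two exponential terms must drop out unless $\alpha-A=q(\alpha+A)$ or $q(\alpha-A)=\alpha+A$, which is case (2), while the same bookkeeping when $\mathcal{A}_{3}\neq0$ leaves the form of case (3). Assembling the branches completes the proof.
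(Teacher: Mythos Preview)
Your outline tracks the paper's proof closely: factor via \cite[Lemma~1]{Liu1}, derive the analogues of \eqref{3.2}--\eqref{3.8}, and run a Borel-type case split on the resulting exponential identity. The paper organises the split by first separating on whether $\mathcal{A}_2$ or $\mathcal{A}_3$ vanishes (its Cases~1--3) before invoking Borel, whereas you let the vanishing emerge from the Borel analysis; this reorders the subcases but does not change the substance.

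Your flagged ``dangerous branch'' is exactly where both your outline and the paper's argument break --- and the break is genuine, not a gap you can close. In the subcase $p(z)+p(qz)-g(qz)+g(z)=b$ with $\deg g>1$, the Borel constraints force $a_1=b_1=0$, $a_0=b_0\omega_1$, $a_2=b_2\omega_2$, hence $\mathcal{A}_1=\mathcal{A}_2=0$, $\mathcal{A}_3\neq0$; nothing in the hypotheses excludes this, and $f=Ce^{g-p}$ then solves \eqref{3.1}. Concretely, take $q=2$, $\omega=2$, $g(z)=z^2$, and set $Q_1(f)=(-2+\sqrt{3})f(z)+(2+\sqrt{3})f(2z)$, $Q_2(f)=f(z)-f(2z)$: then $r(\mathcal{A})=2$, $q^n\neq\pm1$ for every $n\ge1$, and $f(z)=\tfrac{1}{2\sqrt{3}}\,e^{2z^2/5}$ gives $Q_1-\omega_1Q_2=2\sqrt{3}\,f(2z)$ and $Q_1-\omega_2Q_2=2\sqrt{3}\,f(z)$, so $Q_1^2+4Q_1Q_2+Q_2^2=12f(z)f(2z)=e^{2z^2}$. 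The paper's Case~2 dismisses this configuration by asserting that ``the same proof method as in Case~1'' yields $\deg g=1$, but that method obtained its contradiction from $\mathcal{A}_2\neq0$, which is unavailable here. So the statement needs an additional family --- the $q$-analogue of Theorem~\ref{th1}(4) --- and your hedge ``if it does not the statement would need an extra case'' was exactly right.
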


\begin{rem}
Theorem \ref{th2} shows that the equation  \eqref{1.8} has no  entire solutions $f$ of $\rho_2(f)<\infty$
when $\alpha z+\beta$ in \eqref{1.8} is replaced by a  polynomial $g(z)$ with $\deg g >1$. And
 Theorem \ref{th2} removes   the condition  \(\rho(f)\leq 1\) in assertion
$\rm{(ii)}$ of Theorem \ref{thC}.  Therefore the parts related to Theorem \ref{thC} in Question \ref{Q1} and Question \ref{Q2} are solved.
\end{rem}

\begin{rem}

 Theorem \ref{th2} improves   the condition  \(\rho(f)<\infty\)  in Theorem \ref{thC}  to $\rho_2(f)<\infty$.
\end{rem}

The following example shows condition \(q^n\neq \pm 1,\) $n\in \mathbb{N}^+$, is necessary.
\begin{exa}\cite[Example 1.1]{Ga}
    \( f(z) = \pm \frac{\sqrt{6}}{6} e^{\frac{z^3}{2}} \) is a transcendental entire solution of
\[
\left(f  (-\frac{1}{2} + \frac{\sqrt{3}}{2}i ) z \right)^2 + 4 f(z) f \left( ( -\frac{1}{2} + \frac{\sqrt{3}}{2}i ) z \right) + f(z)^2 = e^{z^3}.
\]
Here, \( g(z) = z^3 \), \( q = -\frac{1}{2} + \frac{\sqrt{3}}{2}i \), $q^3=1$.

\end{exa}

\begin{proof}[Proof of Theorem \ref{th2}]
Similar to   the beginning of proof of Theorem \ref{th1}, we can get the  following three equations,
\begin{align}
\label{3.2}
-\mathcal{A}_1f''-\mathcal{A}_3f_q=\frac{(b_0\omega_2 - a_0)e^{g + p} + (a_0 - b_0\omega_1)e^{g - p}}{\omega_2 - \omega_1},
\end{align}

\begin{align}
\label{3.3}
\mathcal{A}_1f-\mathcal{A}_2f_q=\frac{(b_1\omega_2 - a_1)e^{g + p} + (a_1 - b_1\omega_1)e^{g - p}}{\omega_2 - \omega_1},
\end{align}

\begin{align}
\label{3.4}
\mathcal{A}_3f+\mathcal{A}_2f''=\frac{(b_2\omega_2 - a_2)e^{g + p} + (a_2 - b_2\omega_1)e^{g - p}}{\omega_2 - \omega_1},
\end{align}
 where $p$ is an entire function of $\rho(p)<\infty$, and  $f_q=f(qz)$.
Differentiating the equation \eqref{3.3} twice yields
\begin{align}
\label{3.5}
\mathcal{A}_1f''-q^2\mathcal{A}_2f''_q=\frac{(b_1\omega_2 - a_1)[(g'+p')^2+g''+p'']e^{g + p} + (a_1 - b_1\omega_1)[(g'-p')^2+g''-g'']e^{g - p}}{\omega_2 - \omega_1}.
\end{align}
Adding  \eqref{3.2} and  \eqref{3.5} together gives
\begin{align}
\label{3.6}
  -\mathcal{A}_3f_q-q^2\mathcal{A}_2f''_q&=\frac{(b_1\omega_2 - a_1)[(g'+p')^2+g''+p'']e^{g + p} + (a_1 - b_1\omega_1)[(g'-p')^2+g''-g'']e^{g - p}}{\omega_2 - \omega_1}\\ \nonumber
  &+\frac{(b_0\omega_2 - a_0)e^{g + p} + (a_0 - b_0\omega_1)e^{g - p}}{\omega_2 - \omega_1}.
\end{align}
Taking the $q$-shift of \eqref{3.4} and combining \eqref{3.6} gives
\begin{align}
\label{3.7}
(1-q^2)\mathcal{A}_2f''_q&=\frac{(b_2\omega_2 - a_2)e^{g_q + p_q} + (a_2 - b_2\omega_1)e^{g_q - p_q}}{\omega_2 - \omega_1}+\frac{(b_0\omega_2 - a_0)e^{g + p} + (a_0 - b_0\omega_1)e^{g - p}}{\omega_2 - \omega_1}\\ \nonumber
&+\frac{(b_1\omega_2 - a_1)[(g'+p')^2+g''+p'']e^{g + p} + (a_1 - b_1\omega_1)[(g'-p')^2+g''-g'']e^{g - p}}{\omega_2 - \omega_1},
\end{align}

\begin{align}
\label{3.8}
(q^2-1)\mathcal{A}_3f_q=&=\frac{(b_1\omega_2 - a_1)[(g'+p')^2+g''+p'']e^{g + p} + (a_1 - b_1\omega_1)[(g'-p')^2+g''-g'']e^{g - p}}{\omega_2 - \omega_1}\\ \nonumber
  &+\frac{(b_0\omega_2 - a_0)e^{g + p} + (a_0 - b_0\omega_1)e^{g - p}}{\omega_2 - \omega_1}+q^2\frac{(b_2\omega_2 - a_2)e^{g_q + p_q} + (a_2 - b_2\omega_1)e^{g_q - p_q}}{\omega_2 - \omega_1}.
\end{align}

If $|a_2|+|b_2|=0$, then  $\mathcal{A}_1\not=0$ by \(r(\mathcal{A}) = 2\)  and $  \mathcal{A}_2=\mathcal{A}_3=0$. Thus the remaining proof is similar to  Case 1 in the proof of Theorem \ref{th1}, we get  Theorem \ref{th2}-(1).

Next, we consider $|a_2|+|b_2|\not=0$. If  $\mathcal{A}_3=\mathcal{A}_2=0$, then  $|a_2|+|b_2|\not=0$  implies $\mathcal{A}_1=0$, which is a contradiction.
Thus, we discuss three cases  $\mathcal{A}_3=0, \mathcal{A}_2\not=0$, $\mathcal{A}_3\not=0, \mathcal{A}_2=0$ and  $\mathcal{A}_3\not=0, \mathcal{A}_2\not=0$.

\setcounter{case}{0}
\begin{case}
\rm{$\mathcal{A}_3=0, \mathcal{A}_2\not=0$. \eqref{3.8} reduces into
\begin{align}
\label{3.9}
B_1 e^{p} + B_2 e^{-p} + B_3 e^{p_q + g_q - g} + B_4 e^{-p_q + g_q - g} = 0,
\end{align}
where
$B_1 = b_0\omega_2 - a_0 + (b_1\omega_2 - a_1)[(g'+p')^2+g''+p''],
B_2 = a_0 - b_0\omega_1 + (a_1 - b_1\omega_1)[(g'-p')^2+g''-p''],
B_3 = q^2(b_2\omega_2 - a_2),
B_4 = q^2(a_2 - b_2\omega_1).$ We claim that $B_3=0 $ and $B_4=0$ at the same time is impossible. Otherwise, $\omega_1=\omega_2$ or $a_2=b_2=0$, which is a contradiction.

If \( p \) is a constant, then \eqref{3.9} becomes
\[
B_1 e^p + B_2 e^{-p} + \left[ B_3 e^p + B_4 e^{-p} \right] e^{g_q-g} = 0.
\]
Since \( g_q - g \) is non-constant, by \( q^n \neq 1 \), then
$B_1 e^p + B_2 e^{-p}=0$ and $ B_3 e^p + B_4 e^{-p} =0.$
That implies \( B_1 B_4 - B_2 B_3 = \{b_0\omega_2 - a_0 + (b_1\omega_2 - a_1)[(g')^2+g'']\}q^2(a_2 - b_2\omega_1)-\{a_0 - b_0\omega_1 + (a_1 - b_1\omega_1)[(g')^2+g'']\}q^2(b_2\omega_2 - a_2)=0 \).
 By comparing the coefficients of the polynomials, we can obtain
\begin{align*}
    (b_0 \omega_2 - a_0)(a_2 - b_2 \omega_1) &= (a_0 - b_0 \omega_1)(b_2 \omega_2 - a_2), \\
        (b_1 \omega_2 - a_1)(a_2 - b_2 \omega_1) &= (a_1 - b_1 \omega_1)(b_2 \omega_2 - a_2).
\end{align*}
This implies  \(\mathcal{A}_2=\mathcal{A}_3=0\), which  contradicts $\mathcal{A}_2\not=0$. Thus, \( p \) is non-constant.

Since $\rho(B_i)<\infty ,i=1,2,3,4$, using  \cite[Theorem 1.51]{yang2003} to \eqref{3.9} yields that one of
$p-p_q-g_q+g,p+p_q-g_q+g,-p-p_q-g_q+g,-p+p_q-g_q+g$ must be a polynomial.
Therefore, from \cite[Lemma 5]{Liu1}, we get $p$ is a polynomial and $\deg g= \deg p$.  Now $B_i,i=1,2,3,4,$ are polynomial, using  \cite[Theorem 1.51]{yang2003} to \eqref{3.9} again yields that one of
$p-p_q-g_q+g,p+p_q-g_q+g,-p-p_q-g_q+g,-p+p_q-g_q+g$ must be a constant.

\setcounter{subsection}{1}
			\setcounter{subcase}{0}
			\renewcommand{\thesubcase}{\arabic{subsection}.\arabic{subcase}}

\begin{subcase}
\rm{
     $p-p_q-g_q+g=b$ is a constant. Then \eqref{3.9} becomes
\begin{align*}
B_1e^b+B_3+B_2e^{-2p+b}+B_4e^{-2p_q}=0.
\end{align*}
By the second main theorem, we have $B_4=q^2(a_2 - b_2\omega_1)=0$. Subsisting this and $\mathcal{A}_3=0$ into \eqref{3.4} implies
$\mathcal{A}_2f''=\frac{(b_2\omega_2 - a_2)e^{g + p}}{\omega_2 - \omega_1}.$ Since $p-p_q-g_q+g=p+g-(p+g)_q=b$, then $p+g$ is a constant. Therefore, $\mathcal{A}_2f''=\frac{(b_2\omega_2 - a_2)e^{g + p}}{\omega_2 - \omega_1}$  is a constant. It contradicts that  $f$ is  transcendental.
}
\end{subcase}

\begin{subcase}
\rm{
 $p+p_q-g_q+g=b$ is a constant. Then \eqref{3.9} becomes
\begin{align*}
B_1e^b+B_2e^{b-2p}+B_3e^{2p_q}+B_4=0.
\end{align*}
Using \cite[Theorem 1.51]{yang2003} to this equation, which  implies $B_2=B_3=0$, $B_1 e^{b} + B_4=0$.   $p+p_q-g_q+g=b$ implies $g'+p'=q(g'-p')_q$.  Therefore, if $\deg (g'-p')\ge1$, then $B_2=B_3=0$ and  $B_1 e^{b} + B_4=0$. This  yields
$a_0 - b_0 \omega_1 = a_1 - b_1 \omega_1 = b_1 \omega_2 - a_1 = b_2 \omega_2 - a_2 = 0.$ This gives \(\mathcal{A}_2 = \mathcal{A}_1= 0\),  which contradicts $\mathcal{A}_2 \not=0. $ Thus $\deg (g'-p')=0$. By  $g'+p'=q(g'-p')_q$, we get  $g'+p'$ is also a constant. Therefore $\deg g=\deg p=1$. Let $ g=\alpha z+\beta$ and $p=Az+B$, where \(\alpha(\not=0), A(\not=0), B, \beta\) are constants. Subsisting $\mathcal{A}_3=0, g,p, B_3=q^2(b_2\omega_2-a_2)=0$ into \eqref{3.4} gives $\mathcal{A}_2f''=\frac{(a_2 - b_2\omega_1)e^{\alpha z+\beta - (Az+B)}}{\omega_2 - \omega_1}.$
It is easy to get
\begin{align*}
f(z) = \frac{(a_2 - b_2 \omega_1) e^{(\alpha - A)z + \beta - B}}{\mathcal{A}_2 (\omega_2 - \omega_1) (\alpha - A)^2} + C_1 z + C_2,
\end{align*}
where $C_1$, $C_2$ are constants. This is the conclusion $(2)$ of Theorem \ref{th2}.
}
\end{subcase}

\begin{subcase}
\rm{
 $-p-p_q-g_q+g=b$ is a constant. Then \eqref{3.9} becomes
\begin{align*}
B_1e^{b+2p}+B_2e^{b}+B_3+B_4e^{-2p_q}=0.
\end{align*}
Using  \cite[Theorem 1.51]{yang2003} to this equation, which  implies $B_1=B_4=0$, $B_2 e^{b} + B_3=0$.
$-p-p_q-g_q+g=b$ implies $g'-p'=q(g'+p')_q$.  Therefore, similar as the case $p+p_q-g_q+g$ is a constant, we can get $ g=\alpha z+\beta$ and $p=Az+B$, where \(\alpha(\not=0), A(\not=0), B, \beta\) are constants. Subsisting $\mathcal{A}_3=0, g,p, B_4=0$ into \eqref{3.4} gives
$\mathcal{A}_2f''=\frac{(b_2\omega_2 - a_2)e^{\alpha z+\beta +(Az+B)}}{\omega_2 - \omega_1}.$ Thus
\begin{align*}
f(z) = \frac{(b_2 \omega_2 - a_2) e^{(\alpha + A)z + \beta + B}}{\mathcal{A}_2 (\omega_2 - \omega_1) (\alpha + A)^2} + C_1 z + C_2,
\end{align*}
where $C_1$, $C_2$ are constants.
This is the conclusion $(2)$ of Theorem \ref{th2}.
}
\end{subcase}

\begin{subcase}
\rm{
 $-p+p_q-g_q+g=b$ is a constant. Then we get $g-p$ is a constant and then \eqref{3.9} becomes
\begin{align*}
B_1e^{b+2p}+B_2e^b+B_3e^{2p_q}+B_4=0.
\end{align*}
This  implies $B_1=B_3=0$, then using the same methods as  the case $p-p_q-g_q+g=b$ is a constant, we also get a contradiction. }
\end{subcase}
}
\end{case}

\begin{case}
\rm{$\mathcal{A}_3\not=0, \mathcal{A}_2=0$. \eqref{3.7} reduces into
\begin{align}
\label{3.10}
D_1 e^{p} + D_2 e^{-p} + D_3 e^{p_q + g_q - g} +D_4 e^{-p_q + g_q - g} = 0,
\end{align}
where
$D_1 = b_0\omega_2 - a_0 + (b_1\omega_2 - a_1)[(g'+p')^2+g''+p''],
D_2 = a_0 - b_0\omega_1 + (a_1 - b_1\omega_1)[(g'-p')^2+g''-p''],
D_3 = b_2\omega_2 - a_2,
D_4 = a_2 - b_2\omega_1.$  We claim  $D_3=0 $ and $D_4=0$  at the same time is impossible, otherwise $\omega_1=\omega_2$ or $a_2=b_2=0$, which is a contradiction. As the same as Case 1, we can get $p$ is no-constant.

Using \cite[Theorem 1.51]{yang2003} to \eqref{3.10} yields that one of
$p-p_q-g_q+g,p+p_q-g_q+g,-p-p_q-g_q+g,-p+p_q-g_q+g$ must be a polynomial. Therefore, from \cite[Lemma 5]{Liu1}, we get $p$ is a polynomial and $\deg g= \deg p$. Using \cite[Theorem 1.51]{yang2003} to \eqref{3.10} again yields
$p-p_q-g_q+g,p+p_q-g_q+g,-p-p_q-g_q+g,-p+p_q-g_q+g$ must be a constant.
Using the same proof method as in Case 1, we can obtain that
$\deg g =1$. By  $\mathcal{A}_2=0$, \eqref{3.4} gives
$f$ is of finite order.
Since  $\deg g=1$ and $\mathcal{A}_2=0$, by
 Theorem \ref{thC}- $\rm{(i)}$, we get Theorem \ref{th2}-(1).

}
\end{case}

\begin{case}
\rm{$\mathcal{A}_3\not=0, \mathcal{A}_2\not=0$. By \eqref{3.7} and \eqref{3.8}, we get

\begin{equation}
\label{3.11}
E_1 e^{p} + E_2 e^{-p} + E_3 e^{(p+g)_q-g} + E_4 e^{(g-p)_q-g} = 0,
\end{equation}
 where
\begin{align*}
E_1 &= \mathcal{A}_2 (b_1\omega_2 - a_1) [u'''' + 4u'u''' + 3(u'')^2 + 6(u')^2 u'' + (u')^4] \\
&\quad + [\mathcal{A}_2 (b_0\omega_2 - a_0) + \mathcal{A}_3 (b_1\omega_2 - a_1)] [u'' + (u')^2] + \mathcal{A}_3 (b_0\omega_2 - a_0), \\
E_2 &= \mathcal{A}_2 (a_1 - b_1\omega_1) [v'''' + 4v'v''' + 3(v'')^2 + 6(v')^2 v'' + (v')^4] \\
&\quad + [\mathcal{A}_2 (a_0 - b_0\omega_1) + \mathcal{A}_3 (a_1 - b_1\omega_1)] [v'' + (v')^2] + \mathcal{A}_3 (a_0 - b_0\omega_1), \\
E_3 &= \mathcal{A}_2 q^2 (b_2\omega_2 - a_2) [u_q'' + (u_q')^2] + \mathcal{A}_3 (b_2\omega_2 - a_2), \\
E_4 &= \mathcal{A}_2 q^2 (a_2 - b_2\omega_1) [v_q'' + (v_q')^2] + \mathcal{A}_3 (a_2 - b_2\omega_1), u = g + p,v = g - p.
\end{align*}

\setcounter{subsection}{3}
			\setcounter{subcase}{0}
			\renewcommand{\thesubcase}{\arabic{subsection}.\arabic{subcase}}

\begin{subcase}\rm{ $E_i\equiv0,i=1,2,3,4$. Since $\omega_1\not=\omega_2$, then   $a_2-b_2\omega_1=0 $ and $b_2\omega_2-a_2=0$  at the same time is impossible.

If $b_2\omega_2-a_2\not=0$ and $a_2-b_2\omega_1\not=0$, then  $E_3=0$ implies $u''_q+(u'_q)^2=c_1$, where $c_1$ is a non-zero constant. Thus $2T(r,u'_q)\le m(r,\frac{u''_q}{u'_q})+m(r,u'_q)+O(1)$.  Therefore, $u'_q$ is a polynomial. $u''_q+(u'_q)^2=c_1$  means $u'_q$ is a constant. Similar, $E_4=0$ implies $v'_q$  is a constant. Thus, $u'_q+v'_q=2g'$  and $u'_q-v'_q=2p'_q$ are constants.
$E_3=E_4=0$ gives $2p''_q+4p'_qg'_q=0$.  Combining this and $\deg g>0$, we get $p'_q=0$. Thus, $p$ is a constant. Let  $ g=\alpha z+\beta$, where \(\alpha(\not=0), \beta\) are constants. Subsisting $p,g$ into \eqref{3.4}, we also get \eqref{sub2.1}.
Solving this equation \eqref{sub2.1} gives
the conclusion $(3)$  of Theorem \ref{th1}, in which $A=0, p=B$.

If $b_2\omega_2-a_2=0$, then $a_2-b_2\omega_1\not=0$  by $\omega_1\not=\omega_2$. If  $a_2-b_2\omega_1=0$, then $b_2\omega_2-a_2\not=0$ by $\omega_1\not=\omega_2$. Without loss of generality, let's consider case $b_2\omega_2-a_2=0$ and $a_2-b_2\omega_1\not=0$.
From $\mathcal{A}_2\not=0$ and $b_2\omega_2-a_2=0$, we get $b_1\omega_2-a_1\not=0$. $E_4=0$  gives  $v''+(v')^2=\frac{-\mathcal{A}_3}{\mathcal{A}_2}$. Using \cite[Lemma 3.3]{hayman} to $v''+(v')^2=\frac{-\mathcal{A}_3}{\mathcal{A}_2}$  implies $v'$ is polynomial. Since $\deg v' >\deg v''$, then $v'$ is a constant.
By using \cite[Lemma 3.3]{hayman}  to $E_1=0$, we also  get $u'$ is a polynomial. Since $E_1=0$, by comparing the degrees of the polynomials, we can obtain that $u'$ is a constant. Therefore  $u'+v'=2g'$ and $u'-v'=2p'$ are constants. Let  $ g=\alpha z+\beta$ and $p=Az+B$, where \(\alpha(\not=0), A(\not=0), B, \beta\) are constants. Subsisting $p,g$ into \eqref{3.4}, we also get \eqref{sub 2.16}, in which $b_2\omega_2-a_2=0$. Solving this equation \eqref{sub 2.16} gives
the conclusion $(3)$  of Theorem \ref{th2}, in which $b_2\omega_2-a_2=0$.
 }
\end{subcase}

\begin{subcase}
\rm{At least one of $E_i$ is not zero. Then \eqref{3.11} gives at least
two of $E_i$ is not zero, $i=1,2,3,4$.

At first, we consider  the case $p$ is a constant.  \eqref{3.11} gives
\begin{align}
\label{3.12}
E_1e^p+E_2e^{-p}=0, E_3e^p+E_4e^{-p}=0.
\end{align}
If $g'$ is not constant,  then \eqref{3.12} yields
\begin{equation*}
(b_1 \omega_2 - a_1)e^p + (a_1 - b_1 \omega_1)e^{-p}=0, \quad (b_2 \omega_2 - a_2)e^p + (a_2 - b_2 \omega_1)e^{-p}=0.
\end{equation*}
This implies $\mathcal{A}_2=0$. It contradicts the assumption  $\mathcal{A}_2\not=0$. Thus, $\deg g=1$.  Let  $ g=\alpha z+\beta$, where \(\alpha(\not=0), \beta\) are constants. Similar to the previous proof of Subcase 3.1, we can also arrive at the conclusion $(2)$  of Theorem \ref{th2}, in which $A=0, p=B$.


Next, we consider $p$ is non-constant.  Using  \cite[Theorem 1.51]{yang2003} and  \cite[Lemma 5]{Liu1} to \eqref{3.11}  gives that one of
$p-p_q-g_q+g,p+p_q-g_q+g,-p-p_q-g_q+g,-p+p_q-g_q+g$ is a constant, $p$ is a polynomial and $\deg g= \deg p$.

If $p-p_q-g_q+g=b$ is a constant, then \eqref{3.11} becomes
\begin{align*}
E_1e^b+E_3+E_2e^{-2p+b}+E_4e^{-2p_q}=0.
\end{align*}
This gives $E_1e^b+E_3=E_2=E_4=0$.
If \( a_2 - b_2 \omega_1 \neq 0 \),  then $E_4=0$ leads that \( g' - p' \) is a constant. If \( a_2 - b_2 \omega_1 = 0 \), by \( a_2 b_1 - a_1 b_2 =\mathcal{A}_2\neq 0 \), then \( a_1 - b_1 \omega_1 \neq 0 \).  Thus \( E_2 \equiv 0 \) implies that \( g' - p' \) is a constant. Since $p-p_q-g_q+g=p+g-(p+g)_q=b$, then $p'+g'$ is also a constant. It means \( \deg p = \deg g = 1 \).  Let $ g=\alpha z+\beta$ and $p=Az+B$, where \(\alpha(\not=0), A(\not=0), B, \beta\) are constants. Subsisting $p,g$ into \eqref{3.4}, we also get \eqref{sub 2.16}. Solving this equation gives
the conclusion $(3)$  of Theorem \ref{th2}.

If $p+p_q-g_q+g=b$ is a constant, then \eqref{3.11} becomes
\begin{align*}
E_1e^b+E_2e^{b-2p}+E_3e^{2p_q}+E_4=0.
\end{align*}
Using  \cite[Theorem 1.51]{yang2003} to this equation  implies $E_2=E_3=0$, $E_1 e^{b} + E_4=0$.   $p+p_q-g_q+g=b$ implies $g'+p'=q(g'-p')_q$.
If \( a_2 - b_2 \omega_2 \neq 0 \),  then $E_3=0$ leads that \( g' + p' \) is a constant. Thus, $g'+p'=q(g'-p')_q$ implies $g'-p'$ is also a constant.  If \( a_2 - b_2 \omega_2 = 0 \), then \( a_1 - b_1 \omega_2 \neq 0 \) by
\( a_2 b_1 - a_1 b_2 =\mathcal{A}_2\neq 0 \).  Thus \(E_1 e^{b} + E_4= 0 \) implies that \( g' - p' \) is a constant. $p+p_q-g_q+g=b$ implies $g'+p'=q(g'-p')_q$ is also a constant.
In summary \( \deg p = \deg g = 1 \). Let $ g=\alpha z+\beta$ and $p=Az+B$, where \(\alpha(\not=0), A(\not=0), B, \beta\) are constants. Subsisting $p,g$ into \eqref{3.4}, we also get \eqref{sub 2.16}. Solving this equation gives
the conclusion $(3)$  of Theorem \ref{th2}.

The remaining two cases can be handled using similar methods as the cases $p-p_q-g_q+g$ is a constant and $p+p_q-g_q+g$ is a constant, and we still arrive at the conclusion $(3)$  of Theorem \ref{th2}; here, we omit the proof.
}

In summary,  Theorem \ref{th2} is proved completely.

\end{subcase}

}

\end{case}

\end{proof}

\section{Examples}

Some examples are provided in  this section
to illustrate the results from Theorem \ref{th1} and Theorem \ref{th2}.

\begin{exa}Let $c=2\pi i$, $\omega=2$ and
$\mathcal A = \begin{pmatrix}1 & 0& 0\\ 0 & 0 & 1 \end{pmatrix},$
$g(z) = z + \frac{\ln 6}{2}.$ Then, $\mathcal{A}_2 = 0$,   $f(z) = e^{z}$ is a solution of \eqref{2.1}. This  demonstrates  that  conclusion (1) of Theorem \ref{th1}  may  occur.

\end{exa}
\begin{exa}
 $f(z) = e^z$ is a solution of $(f''(z)+f(z+z\pi i))^2+4(f''(z)+f(z+2\pi i))(f''(z)-f(z+2\pi i))+(f''(z)-f(z+2\pi i))^2=4e^{2z}$.
 There  Matrix $\mathcal{A} = \begin{pmatrix} 0 & 1 & 1 \\ 0 & 1 & -1\end{pmatrix}$,
 $  r(\mathcal{A}) = 2, ~ \mathcal{A}_1 = 0,~\mathcal{A}_2 = -2 \neq 0, \mathcal{A}_3 = 0$. This  shows  that  conclusion (2) of Theorem \ref{th1} may  occur.
\end{exa}

\begin{exa}
 $f(z)=ze^z$ is a solution of
$(f(z)-(f(z+2\pi i))^2+\frac{\pi}{i}(f(z)-(f(z+2\pi))(f(z)-f''(z))+(f(z)-f''(z))^2=e^{2z+\ln4}$.
There Matrix $
    \mathcal{A} = \begin{pmatrix} 1 & 0 & -1 \\ 1& -1 & 0 \end{pmatrix},
    \mathcal{A}_2 =-1, \quad \mathcal{A}_3 = 1$. This  demonstrates that   conclusion (3) of Theorem \ref{th1} may  occur.
\end{exa}

Example \ref{ex1} illustrates  that  conclusion (4) of Theorem \ref{th1} may  occur.

\begin{exa}
    Let
$\mathcal{A} = \begin{pmatrix} 1 & 0& 0 \\ 0&1 & 0 \end{pmatrix}, \omega=2, g=e^z$, then $\mathcal{A}_2 = 0$ and $f=\frac{1}{\sqrt{6}}e^z$ is a solution of \eqref{3.1}. This  demonstrates  that  conclusion (1) of Theorem \ref{th2} may  occur.
\end{exa}

\begin{exa}Let $\omega = 2$, $g(z) = 2qz + \ln \sqrt{13}$ and
\[
Q_1(f) = f(z) - \frac{1}{4} f'(z) + 2f(qz), \quad Q_2(f) = f(z) - \frac{1}{4} f'(z) + f(qz),
\]
where $q$ is a non-zero constant. Then $\mathcal{A}_2 = \frac{1}{4}$, $\mathcal{A}_3 = -1$ and \eqref{3.1} has an entire solution $f(z) = e^{2z}$. This  demonstrates   that  conclusion (3) of Theorem \ref{th2} may  occur.
\end{exa}

	\section*{Declarations}
	\begin{itemize}
		\item \noindent{\bf Funding}
		This research work was supported by the National Natural Science Foundation of China (Grant No. 12261023, 11861023) and Graduate Research Fund Project of Guizhou Province (2024YJSKYJJ186).
		
		\item \noindent{\bf Conflicts of Interest}
		The authors declare that there are no conflicts of interest regarding the publication of this paper.

        \item\noindent{\bf Author Contributions}
All authors contributed to the study conception and design. All authors read and approved the final manuscript.

	\end{itemize}

\end{document}